   \def\MR#1{}
\newtheorem{theorem}{Theorem}
\newtheorem{corollary}{Corollary}
\newtheorem{lemma}{Lemma}
\newtheorem{proposition}{Proposition}
\newtheorem{conj}{Conjecture}
\theoremstyle{definition} 
\newtheorem{defi}{Definition}
\newtheorem{question}{Question}
\let\oldquestion\question
\renewcommand{\question}{\oldquestion\normalfont}
\newtheorem{example}{Example}
\let\oldexample\example
\renewcommand{\example}{\oldexample\normalfont}
\newtheorem{remark}{Remark}
\let\oldrmk\remark
\renewcommand{\remark}{\oldrmk\normalfont}
\newtheorem*{claim}{\textsc{Claim}}
\definecolor{darkblue}{rgb}{0.1,0.1,0.9}
\definecolor{darkred}{rgb}{0.9,0.1,0.1}
\newcommand{\Rmnum}[1]{\expandafter\@slowromancap\romannumeral #1@}
\providecommand{\MR}[1]{}
\providecommand{\MR}{\relax\ifhmode\unskip\space\fi MR }
\providecommand{\href}[2]{#2}
\begin{document}

\title[A Nonlinear Sandwich Theorem]{A Nonlinear Sandwich Theorem}

\author[Mario Ghossoub, Giulio Principi, and Lorenzo Stanca]{Mario Ghossoub\vspace{0.1cm}\\University of Waterloo\vspace{0.6cm}\\Giulio Principi\vspace{0.1cm}\\New York University\vspace{0.6cm}\\Lorenzo Stanca\vspace{0.1cm}\\Collegio Carlo Alberto and University of Turin\vspace{1cm}\\ \today\vspace{-0.2cm}}

\address{{\bf Mario Ghossoub}: University of Waterloo -- Department of Statistics and Actuarial Science -- 200 University Ave.\ W.\ -- Waterloo, ON, N2L 3G1 -- Canada}
\email{\href{mailto:mario.ghossoub@uwaterloo.ca}{mario.ghossoub@uwaterloo.ca}\vspace{0.2cm}}

\address{{\bf Giulio Principi}: New York University -- Department of Economics -- 19 West 4th Street -- New York, NY 10012 -- USA}
\email{\href{mailto:gp2187@nyu.edu}{gp2187@nyu.edu}\vspace{0.2cm}}

\address{{\bf Lorenzo Stanca}: Collegio Carlo Alberto and University of Turin -- Department of ESOMAS -- Corso Unione Sovietica, 218 Bis -- Turin 10134 -- Italy}
\email{\href{mailto:lorenzo.stanca@carloalberto.org}{lorenzo.stanca@carloalberto.org}}

\thanks{We thank Andrea Aveni, Fabio Maccheroni, and Ren\'e Pfitscher for useful discussions, as well as Stephen Simons for providing some relevant references. Mario Ghossoub acknowledges financial support from the Natural Sciences and Engineering Research Council of Canada (NSERC) (Grant No.\ 2018-03961). Giulio Principi is grateful for the financial support provided by the Henry M.\ MacCracken Fellowship at New York University.\vspace{0.2cm}}

\keywords{Sandwich Theorem; Pataraia's Theorem; $\textbf{C}$-(super/sub)linearity; convex-conic symmetric preorder\vspace{0.2cm}}
\subjclass[2020]{Primary: 46A22; Secondary: 06A06, 06F20.\vspace{0.2cm}}


\maketitle


\begin{abstract}
We provide a Sandwich Theorem (\cite{Konig72}) for positively homogeneous functionals that satisfy additivity only on a restricted domain. Our relaxation of additivity is based on a binary relation called \textit{convex-conic symmetric preorder}, whereby additivity is restricted to all couples of elements that belong to such relation. We then study applications of our nonlinear Sandwich Theorem, proving extension and envelope representation results. Finally, we consider some applications to comonotonicity, a key property in decision theory, risk measurement, and the theory of risk sharing.
\end{abstract}

\vspace{0.4cm}

\section{Introduction}
The classic version of the Sandwich Theorem yields the existence of a linear functional in between a given sublinear functional and a given superlinear functional. \cite{Konig72} proved this result as a corollary to the Hahn-Banach extension theorem. In a recent paper, \cite{Amarante_positivity} provided an alternative proof, 
by combining an argument in \cite{Isotone_Wright} with Pataraia's Fixed Point Theorem (\cite{pataraia1997constructive}). In this paper, we show how the elegant approach developed by \cite{Amarante_positivity} can be directly implemented to retrieve nonlinear versions of the Sandwich Theorem, restricting (super/sub)linearity only to some portions of the domain of the functionals involved. Specifically, we define a binary relation (denoted by $\mathbf{C}$) that we call the \textit{convex-conic symmetric preorder}, which is a symmetric preorder closed with respect to positive scalar multiplication and addition. Using this preorder, we provide a Sandwich Theorem (see Theorem \ref{prop_Pataraia}) for $\mathbf{C}$-(super/sub)linear functionals, i.e., functionals that satisfy positive homogeneity and (super/sub)additivity only with respect to $(x,y)\in \mathbf{C}$. We then use this result to prove a Hahn-Banach type extension result (see Corollary \ref{Ext_Theorem}) and an envelope representation result (see Corollary \ref{envelope_Theorem}). We conclude by providing an illustration of the applicability of our results, with a focus on comonotonic subadditive functionals.

\vspace{0.4cm}
\section{Pataraia's Theorem and Order-Theoretic Terminology}

\subsection{Order-Theoretic Terminology}\footnote{We summarize in this subsection all the order-theoretic concepts that we use in this paper. 
For a comprehensive treatment of these notions, we refer the reader to \cite{Finite_ordered_set} and \cite{Schroder_ordered_sets}.} Fix a nonempty set $S$. By a \textit{(binary) relation} over $S$ we mean a set $\mathbf{R}\subseteq S\times S$. For all $x,y,z\in S$, we will often write $x\mathbf{R} y$ in place of $(x,y)\in \mathbf{R}$, and $x\mathbf{R} y \mathbf{R} z$ instead of $x\mathbf{R}y$ and $y\mathbf{R}z$. 

\vspace{0.2cm}

We say that a binary relation $\mathbf{R}$ is \textit{reflexive} if $x\mathbf{R}x$ for all $x\in S$, while it is \textit{symmetric} if $x\mathbf{R}y$ implies $y\mathbf{R}x$, for all $x,y\in S$. If $x\mathbf{R}y$ and $y\mathbf{R}z$ implies $x\mathbf{R}z$, for all $x,y,z\in S$, then $\mathbf{R}$ is said to be \textit{transitive}. A reflexive and transitive relation is called a \textit{preorder}. 
A preorder $\mathbf{R}$ is a \textit{partial order} if it is also \textit{antisymmetric}, that is, if $x\mathbf{R}y$ and $y\mathbf{R}x$, then $x=y$, for all $x,y\in S$, in which case we say that $(S,\mathbf{R})$ is a \textit{partially ordered set} (henceforth, \textit{poset}). A partial order $\mathbf{R}$ is said to be a \textit{total order} if for all $x,y\in S$, either $x\mathbf{R}y$ or $y\mathbf{R}x$, in which case we say that $(S,\mathbf{R})$ is a \textit{totally ordered set}. A totally ordered subset of a poset will be referred to as a \textit{chain}. 

\vspace{0.2cm}

Given a poset $(S,\mathbf{R})$ we say that an element $x\in S$ is an $\mathbf{R}$-\textit{upper bound} of $B\subseteq S$ if $x\mathbf{R}B$, that is, $x\mathbf{R}y$ for all $y\in B$. We say that $x$ is an $\mathbf{R}$-\textit{supremum} for $B\subseteq S$ if it is an $\mathbf{R}$-upper bound of $B$ and $z\mathbf{R}x$, for all $z\in S$ with $z\mathbf{R}B$. Infima are defined analogously. Given a poset $(S,\mathbf{R})$, we denote $\mathbf{R}$-suprema and $\mathbf{R}$-infima of $B\subseteq S$ by $\mathbf{R}$-$\sup B$ and $\mathbf{R}$-$\inf B$, respectively. When the binary relation is well-understood within the context we will simply write $\sup$ and $\inf$ to ease the notation. We say that a poset $(S,\mathbf{R})$ is \textit{strictly inductively ordered} if every chain $C\subseteq S$ admits a $\mathbf{R}$-supremum. Given a poset $(S,\mathbf{R})$, we say that a mapping $F:S\to S$ is $\mathbf{R}$-\textit{inflationary} if $F(s)\mathbf{R}s$, for all $s\in S$. Given a nonempty set $A$, a poset $(S,\mathbf{R})$, and two maps $f,g:A\to S$, we say that $f\mathbf{R}g$ if and only if $f(a)\mathbf{R}g(a)$ for all $a\in A$. We will refer to this order as the $\mathbf{R}$-\textit{pointwise order}, and, often simply as the \textit{pointwise order}, when the underlying relation is clear from the context. It is immediately noticeable that given a poset $(S,\mathbf{R})$ and a set $A$, the induced $\mathbf{R}$-pointwise order on $S^A$ is also a partial order.

\vspace{0.2cm}

We say that a (real) vector space $V$ is an \textit{ordered vector space} if it is endowed with a partial order $\leq$, such that $x\leq y$ if and only if $x+z\leq y+z$ and $\alpha x\leq \alpha y$, for all $x,y,z\in V$ and all $\alpha\geq 0$.\footnote{We will exclusively focus on real vector spaces, which we will henceforth refer to simply as vector spaces.} In addition, we say that an ordered vector space $(V,\leq)$ is a \textit{Riesz space} if it is also a \textit{lattice}, that is, $\sup\left\lbrace x,y\right\rbrace,\inf\left\lbrace x,y \right\rbrace\in V$, for all $x,y\in V$. A subset $B$ of a Riesz space $(V,\leq)$ is said to be \textit{bounded from above} if it admits an upper bound in $V$. Finally, we say that a Riesz space $(V,\geq)$ is \textit{Dedekind-complete} if each subset of $V$ which is bounded from above admits a supremum.\footnote{We refer to \cite{Positive_operators} for a detailed treatment of Riesz spaces.} For any vector space $V$ we will denote by $\mathbf{0}_{V}$ its null element.

\vspace{0.2cm}
\subsection{Pataraia's Fixed Point Theorem}
Similarly to \cite{Amarante_positivity} we will apply the following version of Pataraia's Fixed Point Theorem (see also \cite{escardo}).

\vspace{0.1cm}

\begin{theorem}[Pataraia]\label{pataraiaFPT}
 Let $(S, \mathbf{R})$ be a poset and $\mathcal{I}$ be the set of all inflationary mappings on $S$. If $(S,\mathbf{R})$ is strictly inductively ordered, then $\mathcal{I}$ has a common fixed point, that is, there exists $x\in S$ such that $f(x)=x$, for every $f\in\mathcal{I}$.
\end{theorem}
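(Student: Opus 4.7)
The plan is to reduce Pataraia's theorem to the existence of an $\mathbf{R}$-maximal element of $S$ via Zorn's Lemma, and then to notice that any such maximal element is automatically a common fixed point for every inflationary map.

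First, I would establish that a maximal element exists. Since $(S,\mathbf{R})$ is strictly inductively ordered, every chain $C\subseteq S$ admits an $\mathbf{R}$-supremum in $S$, which is in particular an $\mathbf{R}$-upper bound for $C$ lying in $S$. The hypothesis of Zorn's Lemma is therefore satisfied, and we obtain some $x^{\ast}\in S$ that is $\mathbf{R}$-maximal, in the sense that no $y\in S$ with $y\neq x^{\ast}$ satisfies $y\mathbf{R}x^{\ast}$.

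Second, I would verify that this $x^{\ast}$ does the job. Let $F\in\mathcal{I}$ be arbitrary. Inflationarity gives $F(x^{\ast})\mathbf{R}x^{\ast}$, while the maximality of $x^{\ast}$ forbids any element distinct from $x^{\ast}$ that relates to $x^{\ast}$ via $\mathbf{R}$. Hence $F(x^{\ast})=x^{\ast}$, and since $F$ was arbitrary, $x^{\ast}$ is a common fixed point of $\mathcal{I}$.

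I do not expect a substantial obstacle: the essential content is the elementary observation that ``common fixed point of every inflationary map'' coincides with ``$\mathbf{R}$-maximal element'' of $(S,\mathbf{R})$, after which the result is just Zorn's Lemma applied to the hypothesis of strict inductive ordering. If one wishes to avoid invoking Zorn's Lemma as a black box, one may argue instead from Hausdorff's Maximal Principle: pick a maximal chain $M\subseteq S$, set $m^{\ast}=\mathbf{R}\text{-}\sup M\in S$, observe that $m^{\ast}\in M$ (for otherwise $M\cup\{m^{\ast}\}$ would be a strictly larger chain, contradicting maximality), and then note that for any $F\in\mathcal{I}$ the set $M\cup\{F(m^{\ast})\}$ is still a chain because $F(m^{\ast})\mathbf{R}m^{\ast}\mathbf{R}y$ for every $y\in M$. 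Maximality of $M$ forces $F(m^{\ast})\in M$, so $m^{\ast}\mathbf{R}F(m^{\ast})$, and antisymmetry combined with $F(m^{\ast})\mathbf{R}m^{\ast}$ gives $F(m^{\ast})=m^{\ast}$.
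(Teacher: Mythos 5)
Your proof is correct, but it takes a genuinely different route from the paper's. You apply Zorn's Lemma directly to $(S,\mathbf{R})$: since every chain has a supremum, it has an upper bound, so $S$ admits an $\mathbf{R}$-maximal element $x^{\ast}$, and for any inflationary $F$ the relation $F(x^{\ast})\mathbf{R}x^{\ast}$ together with maximality forces $F(x^{\ast})=x^{\ast}$. Your side remark is also accurate: the converse holds (a common fixed point of \emph{all} inflationary maps must be maximal, since otherwise one can define an inflationary map that moves it), so under these hypotheses the theorem is exactly the existence of a maximal element. The paper instead lifts the problem to the function space: it shows that $\mathcal{I}$ with the pointwise order is itself strictly inductively ordered, extracts a maximal $M\in\mathcal{I}$ by Zorn's Lemma, and deduces $f\circ M=M$ for every $f\in\mathcal{I}$, so that each value $M(s)$ is a common fixed point. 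That detour mirrors Pataraia's original argument, whose point in its native setting (monotone inflationary maps on a dcpo) is to avoid the axiom of choice, because there the relevant $\mathcal{I}$ is directed under composition and has a greatest element without Zorn; in the paper's rendition Zorn is invoked anyway, so nothing constructive is preserved and your direct argument establishes the same statement more economically. The only point worth making explicit in your write-up is that $S$ is nonempty (the paper assumes this throughout), since both Zorn's Lemma and the conclusion require it; with that noted, either of your two variants (Zorn or Hausdorff's Maximal Principle) is complete.
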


\vspace{0.1cm}

\begin{proof}
Since $\mathbf{R}$ is reflexive, $\mathrm{Id}\in \mathcal{I}$. Moreover, given that all $f\in \mathcal{I}$ are inflationary, it follows that $f\mathbf{R} \mathrm{Id}$. Suppose that $\mathcal{C}$ is a chain in $\mathcal{I}$ and define $\overline{f}:s\mapsto \sup\left\lbrace f(s):f\in \mathcal{C} \right\rbrace$. Clearly $\overline{f}\in \mathcal{I}$ and hence it is a $\mathbf{R}$-supremum for $\mathcal{C}$ with respect to the pointwise order. This yields that $\mathcal{I}$ with the pointwise order is strictly inductively ordered. Therefore, by Zorn's Lemma, $\mathcal{I}$ has a maximal element, say $M\in \mathcal{I}$. Note that for all $s\in S$ and all $f\in \mathcal{I}$, $f(M(s))\mathbf{R} M(s)$. Thus, since $M$ is maximal and $\mathbf{R}$ is antisymmetric, we must have $f\circ M=M$. Therefore, for all $s\in S$, $M(s)$ is a common fixed point of $\mathcal{I}$.
\end{proof}

\vspace{0.4cm}
\section{Main Results}

\subsection{Convex-Conic Symmetric Preorders}
Given a vector space $V$, we say that $X\subseteq V$ is a \textit{convex cone} if $\lambda X\subseteq X$ for all $\lambda>0$, and $X+X\subseteq X$. Now let $X$ be a given convex cone. 

\vspace{0.1cm}

\begin{defi}\label{def:ccsp}
A binary relation $\mathbf{C}\subseteq X\times X$ is a \textit{convex-conic symmetric preorder} if it is reflexive, transitive, symmetric, and it satisfies the following properties:
\vspace{0.2cm}
\begin{enumerate}[label=(\roman*)]
    \item \label{item1:poshom_ccsp} $(\lambda x) \mathbf{C} x$, for all $x\in X$ and $\lambda>0$.
    \vspace{0.3cm}
    \item \label{item2:add_ccsp} $x\mathbf{C}y\mathbf{C}z\ \textnormal{implies}\ (x+y)\mathbf{C}z$, for all $x,y,z\in X$.
\end{enumerate}
\end{defi}

\vspace{0.2cm}

The \say{convex-conic} adjective in the definition of a convex-conic symmetric preorder is due to the fact that $\mathbf{C}(x)=\left\lbrace y\in X:y\mathbf{C}x \right\rbrace$ is a convex cone. Note that the symmetry property \ref{item2:add_ccsp} in Definition \ref{def:ccsp} implies that whenever $x\mathbf{C}y\mathbf{C}z$, we have $(g+u)\mathbf{C}h$, for all $g,u,h\in \left\lbrace x,y,z\right\rbrace$. It is important to observe that whenever $\left\lbrace \mathbf{0}_V\right\rbrace\times X\subseteq \mathbf{C}$, for some convex-conic symmetric preorder $\mathbf{C}$ on $X$, it follows that $X\times X=\mathbf{C}$. This is a straightforward consequence of symmetry and transitivity. Because of property \ref{item2:add_ccsp}, the same would hold if $(-x)\mathbf{C}x$ for all $x\in X$ with $-x\in X$. For future reference we recollect these simple observations on the following lemma.

\vspace{0.2cm}

\begin{lemma}\label{worrisome_lemma}
Let $X$ be a convex cone and $\mathbf{C}\subseteq X\times X$ a convex-conic symmetric preorder. If either $\left\lbrace \mathbf{0}_V\right\rbrace\times X\subseteq \mathbf{C}$ or $(-x)\mathbf{C}x$ for all $x\in X$ with $-x\in X$, then $X\times X=\mathbf{C}$.
\end{lemma}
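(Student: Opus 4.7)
The plan is to observe that both disjuncts in the hypothesis are designed to funnel into a single intermediate conclusion: $\mathbf{0}_V\in X$ and $\{\mathbf{0}_V\}\times X\subseteq \mathbf{C}$. Once this intermediate claim is in hand, the closing step is the same for both cases: for any $x,y\in X$, from $\mathbf{0}_V\mathbf{C}x$ and symmetry one gets $x\mathbf{C}\mathbf{0}_V$; combining with $\mathbf{0}_V\mathbf{C}y$ via transitivity yields $x\mathbf{C}y$, so $\mathbf{C}=X\times X$. I would start the proof by writing out this closing paragraph once, so that each case reduces to verifying the intermediate claim.

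For the first hypothesis the intermediate claim is essentially the assumption itself (noting that $\{\mathbf{0}_V\}\times X\subseteq \mathbf{C}\subseteq X\times X$ forces $\mathbf{0}_V\in X$ whenever $X\neq\emptyset$), so nothing remains to do.

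For the second hypothesis, the plan is to exploit property \ref{item2:add_ccsp} of Definition \ref{def:ccsp}. Pick any $x\in X$ with $-x\in X$; then $\mathbf{0}_V=x+(-x)\in X+X\subseteq X$, so $\mathbf{0}_V\in X$. By hypothesis $(-x)\mathbf{C}x$, and symmetry gives $x\mathbf{C}(-x)$, producing the chain $x\,\mathbf{C}\,(-x)\,\mathbf{C}\,x$. Applying property \ref{item2:add_ccsp} to this chain yields $(x+(-x))\mathbf{C}x$, i.e., $\mathbf{0}_V\mathbf{C}x$; a second appeal to symmetry delivers $\mathbf{0}_V\mathbf{C}(-x)$ as well. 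This is exactly the mechanism flagged in the paragraph preceding the lemma, where the strengthened form of property \ref{item2:add_ccsp} is noted.

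The step I expect to be the main obstacle is the final extension within the second case: promoting ``$\mathbf{0}_V\mathbf{C}x$ for all $x\in X$ with $-x\in X$'' to ``$\mathbf{0}_V\mathbf{C}y$ for every $y\in X$''. The only way to close the argument as stated is to read the hypothesis as being nonvacuous in the strong sense that every element of $X$ has its negative in $X$ (for instance when $X$ is a linear subspace, or more generally when $X=-X$). Under that reading every $y\in X$ automatically satisfies $-y\in X$, the intermediate claim $\{\mathbf{0}_V\}\times X\subseteq \mathbf{C}$ follows, and the common closing paragraph gives $\mathbf{C}=X\times X$.
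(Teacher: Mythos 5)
Your argument follows the same route the paper sketches in the paragraph preceding the lemma: the first disjunct is handled by symmetry and transitivity through $\mathbf{0}_V$, and the second by applying property \ref{item2:add_ccsp} to the chain $x\,\mathbf{C}\,(-x)\,\mathbf{C}\,x$ to obtain $\mathbf{0}_V\,\mathbf{C}\,x$ and then reducing to the first disjunct. Both of these steps are carried out correctly.

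The obstacle you flag at the end is genuine, and it is a defect of the statement rather than of your proof: the second disjunct, read literally, does not imply the conclusion. It only delivers $\mathbf{0}_V\,\mathbf{C}\,x$ for those $x$ whose negative lies in $X$, and none of the axioms propagates this to the remaining elements of $X$. Concretely, take $V=\mathbb{R}^2$ and $X=\left\lbrace (a,b):a>0\right\rbrace\cup L$ with $L=\left\lbrace 0\right\rbrace\times\mathbb{R}$, which is a convex cone, and declare $u\,\mathbf{C}\,v$ if and only if either $u,v\in L$ or $u=\lambda v$ for some $\lambda>0$. One checks directly that this is a convex-conic symmetric preorder; the elements of $X$ whose negatives lie in $X$ are exactly those of $L$, and for these $(-x)\,\mathbf{C}\,x$ holds, so the second hypothesis is satisfied non-vacuously; yet $(1,0)$ and $(2,1)$ are unrelated, so $\mathbf{C}\neq X\times X$. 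Your repair --- reading the second disjunct as presupposing $X=-X$, so that every $y\in X$ has $-y\in X$ and the intermediate claim $\left\lbrace\mathbf{0}_V\right\rbrace\times X\subseteq\mathbf{C}$ follows --- is the minimal hypothesis under which the stated conclusion is actually recoverable, and is evidently what the authors intend.
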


\vspace{0.2cm}

Therefore, at first sight one may worry about how permissive a convex-conic symmetric preorder is. We show by means of examples (see Section \ref{examples}) that it is neither always trivial nor too restrictive.


\vspace{0.2cm}
\subsection{A Sandwich Theorem}\label{sandwich_ccsp}

Let $\left(\mathbb{V},\leq \right)$ be a Dedekind-complete Riesz space. Following \cite{Isotone_Wright}, we adjoin to $\mathbb{V}$ an element denoted by $-\infty$, and we extend $\leq$ to $\mathbb{V}\cup\left\lbrace -\infty\right\rbrace$ assuming $-\infty\leq \mathbb{V}$. We say that a map $F:X\to\mathbb{V}\cup\left\lbrace -\infty \right\rbrace$ is $\mathbf{C}$-\textit{sublinear} if
\vspace{0cm}
\begin{enumerate}[label=(\roman*)]
    \item  $F$ is \textit{positively homogeneous}, i.e., $F(\lambda x)=\lambda F(x)$, for all $\lambda > 0$ and all $x\in X$.
    \vspace{0.3cm}
    \item $F$ is $\mathbf{C}$-\textit{subadditive}, i.e., $F(x+y)\leq F(x)+F(y)$, for all $x,y\in X$ with $x\mathbf{C}y$.
\end{enumerate}

\vspace{0.3cm}

The definitions of $\mathbf{C}$-superlinearity and $\mathbf{C}$-linearity are analogous. We endow $V$ with a partial order $\preceq$ such that $(V,\preceq)$ is a partially ordered vector space. We say that a function $F:X\to \mathbb{V}\cup\left\lbrace -\infty \right\rbrace$ is \textit{monotone} if $x\preceq y$ implies $F(x)\leq F(y)$, for all $x,y\in X$. We adopt the convention that $0\cdot (-\infty)=\mathbf{0}_{\mathbb{V}}$. We are now ready to state and prove our main result.

\vspace{0.2cm}

\begin{theorem}\label{prop_Pataraia}
Suppose that $P:X\to \mathbb{V}\cup\left\lbrace -\infty \right\rbrace$ is $\mathbf{C}$-superlinear and $H:X\to \mathbb{V}\cup\left\lbrace -\infty \right\rbrace$ is $\mathbf{C}$-sublinear and monotone. If $P\leq H$, then there exists a $\mathbf{C}$-linear map $Q:X\to \mathbb{V}\cup\left\lbrace -\infty \right\rbrace$ such that $P\leq Q\leq H$. 
\end{theorem}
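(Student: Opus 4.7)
The plan is to mirror the Pataraia-based strategy employed by \cite{Amarante_positivity} in the classical linear setting. Consider
$$\mathcal{F} = \{ Q : X \to \mathbb{V}\cup\{-\infty\} \mid Q \text{ is $\mathbf{C}$-superlinear and } P \leq Q \leq H \},$$
partially ordered by the pointwise order. The plan is to first obtain a suitable fixed point $Q^* \in \mathcal{F}$ via Theorem \ref{pataraiaFPT}, and then to argue that any such $Q^*$ must automatically be $\mathbf{C}$-subadditive, hence $\mathbf{C}$-linear. The set $\mathcal{F}$ is nonempty (as $P \in \mathcal{F}$) and strictly inductively ordered: given a chain $\mathcal{C} \subseteq \mathcal{F}$, Dedekind-completeness of $\mathbb{V}$ combined with the uniform upper bound $H$ guarantees that the pointwise supremum $\bar Q(x) := \sup\{Q(x) : Q \in \mathcal{C}\}$ is well-defined in $\mathbb{V}\cup\{-\infty\}$; positive homogeneity of $\bar Q$ is immediate, while $\mathbf{C}$-superadditivity follows from the chain property, which gives $\bar Q(x) + \bar Q(y) = \sup_{Q \in \mathcal{C}}(Q(x)+Q(y)) \leq \sup_{Q\in\mathcal{C}} Q(x+y) = \bar Q(x+y)$ for all $(x, y) \in \mathbf{C}$.

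Next, I construct an inflationary family on $\mathcal{F}$ whose common fixed point is forced to be $\mathbf{C}$-sublinear. For each $v \in X$ and each $\beta \in \mathbb{V}$ with $\beta \leq H(v)$, consider the sandwich-type lift
$$\Phi_{v,\beta}(Q)(x) = Q(x) \;\vee\; \sup\bigl\{ Q(x - tv) + t\beta \;:\; t > 0,\ x - tv \in X,\ (tv)\,\mathbf{C}\,(x - tv) \bigr\}.$$
Clearly $\Phi_{v,\beta}$ is inflationary. Verifying that $\Phi_{v,\beta}(Q) \in \mathcal{F}$ is the main technical burden: $\mathbf{C}$-superadditivity relies on property \ref{item2:add_ccsp} of Definition \ref{def:ccsp}, which keeps all relevant decompositions inside $\mathbf{C}$, while the bound $\Phi_{v,\beta}(Q) \leq H$ uses $\mathbf{C}$-sublinearity of $H$ (so that $H(x) \leq H(x - tv) + H(tv)$ on admissible pairs), the monotonicity of $H$, and the constraint $\beta \leq H(v)$ combined with positive homogeneity (giving $t\beta \leq H(tv)$). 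Theorem \ref{pataraiaFPT} applied to the family $\{\Phi_{v,\beta}\}$ then yields a common fixed point $Q^* \in \mathcal{F}$.

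Finally, any such $Q^*$ must be $\mathbf{C}$-sublinear: if $(x,y) \in \mathbf{C}$ were a violation, i.e., $Q^*(x+y) > Q^*(x) + Q^*(y)$, then $\mathbf{C}$-sublinearity of $H$ yields $Q^*(x+y) \leq H(x) + H(y)$, which forces $Q^*(x) < H(x)$ (up to a symmetric role of $x$ and $y$). Picking any $\beta$ with $Q^*(x) < \beta \leq H(x)$, the operator $\Phi_{x,\beta}$ would strictly increase $Q^*$ near $x$, contradicting $\Phi_{x,\beta}(Q^*) = Q^*$. Hence $Q^*$ is $\mathbf{C}$-linear, with $P \leq Q^* \leq H$ holding by construction. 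The main obstacle is the verification that $\Phi_{v,\beta}$ genuinely maps $\mathcal{F}$ into itself, and this is where every axiom of the convex-conic symmetric preorder (together with the monotonicity of $H$) is brought to bear.
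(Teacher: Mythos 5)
Your skeleton --- the poset of $\mathbf{C}$-superlinear functionals between $P$ and $H$, strict inductive ordering via Dedekind completeness, Pataraia applied to a family of one-step-extension operators, and extraction of subadditivity at the common fixed point --- is the paper's strategy. But your family $\Phi_{v,\beta}$ does not map $\mathcal{F}$ into itself, and the proof breaks there. To guarantee $\Phi_{v,\beta}(Q)\leq H$ you need $Q(x-tv)+t\beta\leq H(x)$ for every admissible decomposition; writing $x=tv+tw$ and using positive homogeneity, this is equivalent to $\beta\leq\inf\{H(v+w)-Q(w): w\in\mathbf{C}(v),\ Q(w)>-\infty\}$, i.e.\ $\beta\leq A_Q(v)$ in the paper's notation. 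That infimum can be \emph{strictly smaller} than $H(v)$: if $Q(w)=H(w)$ at some $w\,\mathbf{C}\,v$ where $H$ is strictly subadditive, then $H(v+w)-Q(w)<H(v)$, and with $\beta=H(v)$ one gets $\Phi_{v,\beta}(Q)(v+w)\geq Q(w)+\beta=H(w)+H(v)>H(v+w)$, destroying the sandwich. Your verification runs the subadditivity of $H$ in the wrong direction: $H(x)\leq H(x-tv)+H(tv)$ bounds $H(x)$ from \emph{above} by the quantity you produce, whereas you need your candidate bounded above by $H(x)$. So $\beta\leq H(v)$ is necessary but not sufficient; the admissible slope must be the $Q$-dependent quantity $A_Q(v)$, which is exactly what turns your $\Phi_{v,\beta}$ into the paper's operator $T_g$. (A symptom of the problem: your argument never genuinely uses the monotonicity of $H$, which is a hypothesis of the theorem and enters the paper's proof precisely in showing $T_g(Q)\leq H$.)

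The closing contradiction also does not close. If $Q^*(x+y)>Q^*(x)+Q^*(y)$, then $\Phi_{x,\beta}(Q^*)(x+y)\geq Q^*(y)+\beta$, and for this to strictly exceed $Q^*(x+y)$ you need $\beta>Q^*(x+y)-Q^*(y)$, which is strictly more than $\beta>Q^*(x)$ in exactly the situation you are assuming. Whether a $\beta$ exists with $Q^*(x+y)-Q^*(y)<\beta\leq A_{Q^*}(x)$ is essentially equivalent to the subadditivity you are trying to prove, and in a general Dedekind-complete Riesz space $\mathbb{V}$ one cannot in any case freely select elements strictly between two comparable elements. The paper avoids both issues by showing that at the common fixed point $A_{Q^*}=Q^*$: the inequality $A_{Q^*}\geq Q^*$ follows from the superadditivity of $T_g(Q^*)$ together with $T_g(Q^*)\leq H$, the reverse inequality from the scaling estimate $T_x(Q)(x)\geq \frac{1}{m}Q(x)+\frac{m-1}{m}A_Q(x)$ and the Archimedean property (Lemma \ref{toolkit_lemma}(4)), and then the separately established $\mathbf{C}$-sublinearity of $A_{Q^*}$ (Lemma \ref{toolkit_lemma}(1)) transfers to $Q^*$. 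To repair your argument, rebuild the operator around $A_Q$ rather than an arbitrary $\beta\leq H(v)$, and replace the local-improvement contradiction by the identity $A_{Q^*}=Q^*$.
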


\vspace{0.2cm}

The proof will be provided in several steps. Before going into its details, we provide some definitions and simple remarks. Let 
\[\mathcal{D}_{\mathrm{PH}}=\left\lbrace Q:X\to \mathbb{V}\cup\left\lbrace -\infty \right\rbrace:Q\ \textnormal{is}\ \mathbf{C}\textnormal{-superlinear}\ \hbox{and} \  P\leq Q\leq H \right\rbrace.\]

\vspace{0.2cm}

\noindent Clearly, $P\in \mathcal{D}_{\mathrm{PH}}\neq \emptyset$. Moreover, $\mathcal{D}_{\mathrm{PH}}$ is a poset with respect to the pointwise order. If $\mathcal{C}$ is a chain in $\mathcal{D}_{\mathrm{PH}}$, then $\overline{Q}:x\mapsto \sup\left\lbrace Q(x):Q\in \mathcal{C} \right\rbrace$ is $\textbf{C}$-superlinear and a supremum of $\mathcal{C}$. Indeed, for all $(x,y)\in \mathbf{C}$ and all $Q\in \mathcal{C}$,
\[
\overline{Q}(x+y)\geq Q(x+y)\geq Q(x)+Q(y).
\]

\vspace{0.2cm}

\noindent This implies that $\overline{Q}$ is $\textbf{C}$-superlinear. Thus $\mathcal{D}_{\mathrm{PH}}$ is strictly inductively ordered. Let $A:Q\mapsto A_Q$ be defined as
\[
A_Q(x)=\inf\limits_{y}\left\lbrace H(x+y)-Q(y):y\in \mathbf{C}(x)\ \textnormal{and}\ Q(y)>-\infty \right\rbrace,
\]

\vspace{0.2cm}

\noindent for all $x\in X$ and all $Q\in \mathcal{D}_{\mathrm{PH}}$. Now fix $g,x\in X$ and $Q\in \mathcal{D}_{\mathrm{PH}}$. We also define,
\[
T_g(Q)(x)=\sup\limits_{h,\lambda}\left\lbrace Q(h)+\lambda A_Q(g):h+\lambda g\preceq x,\ h\in \mathbf{C}(x),\ \lambda\geq 0 \right\rbrace.
\]

\vspace{0.2cm}

\noindent We then obtain the following result.

\vspace{0.1cm}

\begin{lemma}\label{toolkit_lemma}
The following claims hold:
\vspace{0.2cm}
\begin{enumerate}
    \item For all $Q\in \mathcal{D}_{\mathrm{PH}}$, we have $A_Q\geq Q$ and $A_Q$ is $\mathbf{C}$-sublinear.
    \vspace{0.3cm}
    \item For all $Q\in \mathcal{D}_{\mathrm{PH}}$ and $g\in X$, we have $T_g(Q)$ is $\mathbf{C}$-superlinear.
    \vspace{0.3cm}
    \item For all $Q\in \mathcal{D}_{\mathrm{PH}}$, we have $Q\leq T_g(Q)\leq H$. 
      \vspace{0.3cm}
    \item For all $Q\in \mathcal{D}_{\mathrm{PH}}$ and $x\in X$, we have $T_x(Q)(x)\geq A_Q(x)$.
\end{enumerate}
\end{lemma}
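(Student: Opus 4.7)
The plan begins with two elementary observations that drive all four claims. First, since $\mathbf{C}$ is reflexive, symmetric, and transitive, it is an equivalence relation on $X$; consequently, whenever $x_1, \ldots, x_k$ pairwise $\mathbf{C}$-relate, iterated application of property (ii) of Definition \ref{def:ccsp} places every sum of them in the same equivalence class, and additionally forces all the finite sums together with all the summands to pairwise $\mathbf{C}$-relate (the observation highlighted immediately after the definition). Second, property (i) combined with transitivity and symmetry yields $\mathbf{C}(\lambda x) = \lambda \mathbf{C}(x)$ for every $\lambda > 0$, which underlies all positive-homogeneity calculations.

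For Claim 1, the inequality $A_Q \geq Q$ follows because for any admissible $y \in \mathbf{C}(x)$ with $Q(y) > -\infty$, the chain $H(x+y) \geq Q(x+y) \geq Q(x) + Q(y)$ uses $Q \leq H$ and $\mathbf{C}$-superlinearity of $Q$. Positive homogeneity of $A_Q$ follows via the bijection $y \mapsto \lambda y$ between admissible elements for $x$ and for $\lambda x$, using $\mathbf{C}(\lambda x) = \lambda \mathbf{C}(x)$ and the positive homogeneity of $H$ and $Q$. For $\mathbf{C}$-subadditivity, fix $x_1 \mathbf{C} x_2$ and admissible $y_i$ for $x_i$; all four of $x_1, x_2, y_1, y_2$ are in a single equivalence class, so iterated application of property (ii) yields $(y_1 + y_2) \in \mathbf{C}(x_1 + x_2)$ and $(x_1 + y_1) \mathbf{C} (x_2 + y_2)$. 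Then $\mathbf{C}$-subadditivity of $H$ and $\mathbf{C}$-superlinearity of $Q$ give
\[
H\bigl((x_1+x_2)+(y_1+y_2)\bigr) - Q(y_1+y_2) \leq \bigl[H(x_1+y_1) - Q(y_1)\bigr] + \bigl[H(x_2+y_2) - Q(y_2)\bigr],
\]
and taking the infimum over $y_1, y_2$ independently delivers $A_Q(x_1+x_2) \leq A_Q(x_1) + A_Q(x_2)$. Claim 2 follows in the same pattern: positive homogeneity of $T_g(Q)$ comes from the rescaling $(h, \lambda) \mapsto (\mu h, \mu \lambda)$, while $\mathbf{C}$-superadditivity follows by assembling admissible pairs $(h_1 + h_2, \lambda_1 + \lambda_2)$ (valid by the equivalence-class argument) and using $Q(h_1 + h_2) \geq Q(h_1) + Q(h_2)$.

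For Claim 3, the inequality $Q \leq T_g(Q)$ is witnessed by the admissible pair $(h, \lambda) = (x, 0)$, exploiting the convention $0 \cdot A_Q(g) = \mathbf{0}_\mathbb{V}$. The upper bound $T_g(Q) \leq H$ is handled by case analysis on an admissible triple: if $\lambda = 0$, monotonicity of $H$ and $Q \leq H$ give $Q(h) \leq H(h) \leq H(x)$; if $\lambda > 0$ with $Q(h), A_Q(g)$ finite, the natural substitution $y = h/\lambda$ in the infimum defining $A_Q(g)$ yields $\lambda A_Q(g) \leq H(\lambda g + h) - Q(h)$, and monotonicity of $H$ closes the bound; the remaining infinite cases collapse by the $-\infty$ convention. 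For Claim 4, the admissible pair $(h, \lambda) = ((1-\lambda)x, \lambda)$ with $\lambda \in (0, 1)$ gives $T_x(Q)(x) \geq (1-\lambda) Q(x) + \lambda A_Q(x)$, and Dedekind-completeness of $\mathbb{V}$ ensures the supremum over $\lambda$ of this affine family equals $A_Q(x)$ when $Q(x)$ is finite; trivial cases dispose of $A_Q(x) = -\infty$, while the residual case where $Q(x) = -\infty$ but $A_Q(x)$ is finite is handled by replacing the test element with an admissible $y \in \mathbf{C}(x)$ having $Q(y) > -\infty$. The main obstacle I anticipate is ensuring that $T_g(Q) \leq H$ holds uniformly, even when $h \in \mathbf{C}(x)$ but $h \not\mathbf{C} g$: the reduction $y = h/\lambda$ is then inadmissible in the infimum defining $A_Q(g)$, and this case must be absorbed either by the $-\infty$ convention or by a more delicate choice of test point; secondary care is needed throughout for extended-real-valued arithmetic and for the possibility that the feasible set in the definition of $A_Q$ is empty.
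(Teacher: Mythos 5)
Your proposal reproduces the paper's proof essentially step for step: the same equivalence-class/iterated-additivity argument via property (ii) for Claims 1 and 2, the same split into $\lambda=0$ (monotonicity of $H$) and $\lambda>0$ (substituting $y=h/\lambda$ into the infimum defining $A_Q(g)$) for Claim 3, and the same near-unit convex combination $\frac{1}{m}Q(x)+\frac{m-1}{m}A_Q(x)$ together with the Archimedean property of Dedekind-complete Riesz spaces for Claim 4. The obstacle you flag in Claim 3 --- that $h/\lambda$ need not belong to $\mathbf{C}(g)$ and hence may be inadmissible in the infimum defining $A_Q(g)$ --- is a genuine subtlety, but the paper's own proof makes exactly the same substitution without resolving it (its displayed infimum is even written over $y\in\mathbf{C}(x)$ rather than $y\in\mathbf{C}(g)$), so on this point you have matched the published argument, rough edges included.
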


\vspace{0.2cm}

\begin{proof}
\textbf{(1)}. Fix $Q\in \mathcal{D}_{\mathrm{PH}}$ and $x\in X$, then
\[
H(x+y)-Q(y)\geq Q(x+y)-Q(y)\geq Q(x)+Q(y)-Q(y)=Q(x),
\]

\vspace{0.2cm}

\noindent for all $y\in \mathbf{C}(x)$ with $Q(y)>-\infty$. Therefore, $A_Q\geq Q$. Now suppose that $(x_1,x_2)\in \mathbf{C}$ and $y_1,y_2\in \mathbf{C}(x_1)$ with $Q(y_1)>-\infty$, $Q(y_2)>-\infty$. Notice that since $y_1,y_2\in \mathbf{C}(x_1)$ and $\mathbf{C}$ is symmetric and transitive, we have $x_1\mathbf{C}y_1 \mathbf{C} y_2\mathbf{C}x_2$. By symmetry and property \ref{item2:add_ccsp}, it follows that
\[
\left((x_1+y_1),(x_2+y_2)\right)\in \mathbf{C}.
\]

\vspace{0.2cm}

\noindent These observations imply that $Q(y_1+y_2)\geq Q(y_1)+Q(y_2)$, and hence 
\begin{align*}
H(x_1+x_2+y_1+y_2)-Q(y_1+y_2)&\leq H(x_1+x_2+y_1+y_2)-Q(y_1)-Q(y_2)\\
&\leq H(x_1+y_1)-Q(y_1)+H(x_2+y_2)-Q(y_2).
\end{align*}

\vspace{0.2cm}

\noindent Therefore, $A_Q(x_1+x_2)\leq A_Q(x_1)+A_Q(x_2)$. Consequently, for all $Q\in \mathcal{D}_{\mathrm{PH}}$, $A_Q$ is $\mathbf{C}$-subadditive. Positive homogeneity follows from the positive homogeneity of $Q$ and $H$, as well as from the fact that $\mathbf{C}(x)$ is a cone.

\vspace{0.2cm}

\textbf{(2)}. Fix $Q\in \mathcal{D}_{\mathrm{PH}}$, $g\in X$, and $(x_1,x_2)\in \mathbf{C}$. Then $T_g(Q)(x_1+x_2)\geq T_g(Q)(x_1)+T_g(Q)(x_2)$. Indeed, notice that if $h_1\in \mathbf{C}(x_1)$, $h_2\in \mathbf{C}(x_2)$, $\lambda_1,\lambda_2\geq 0$ satisfy
\[
h_1+\lambda_1 g\preceq x_1\ \mathrm{and}\ h_2+\lambda_2g\preceq x_2,
\]
then $h_1+h_2+(\lambda_1+\lambda_2) g\preceq x_1+x_2$. Since $x_1\mathbf{C}x_2$, by symmetry, transitivity, and property \ref{item2:add_ccsp} we have that $x_1 \mathbf{C} x_2 \mathbf{C} h_1 \mathbf{C}h_2$ and
\[
(h_1+h_2,\lambda_1+\lambda_2)\in \left\lbrace (h,\lambda):h+\lambda g\preceq x_1+x_2,\ h\in \mathbf{C}(x_1+x_2),\ \lambda\geq 0 \right\rbrace. 
\]

\vspace{0.2cm}

\noindent This, together with the $\mathbf{C}$-superadditivity of $Q$, yields
\begin{align*}
    T_g(Q)(x_1+x_2)&=\sup\limits_{h,\lambda}\left\lbrace Q(h)+\lambda A_Q(g):h+\lambda g\preceq x_1+x_2,\ h\in \mathbf{C}(x_1+x_2),\ \lambda\geq 0 \right\rbrace\\
    &\geq \sup\limits_{h_1,h_2,\lambda_1,\lambda_2}\left\lbrace Q(h_1+h_2)+(\lambda_1+\lambda_2) A_Q(g):\begin{array}{l} h_1+h_2+(\lambda_1+\lambda_2) g\preceq x_1+x_2,\\ h_1\in \mathbf{C}(x_1),h_2\in \mathbf{C}(x_2),\lambda_1,\lambda_2\geq 0 \end{array} \right\rbrace\\
    &\geq Q(h_1)+Q(h_2)+\lambda_1  A_Q(g)+\lambda_2  A_Q(g),
\end{align*}

\vspace{0.2cm}

\noindent for all $h_1\in \mathbf{C}(x_1)$, $h_2\in \mathbf{C}(x_2)$, $\lambda_1,\lambda_2\geq 0$ with $h_1+\lambda_1 g\preceq x_1\ \mathrm{and}\ h_2+\lambda_2g\preceq x_2.$ Thus, 
\[
T_g(Q)(x_1+x_2)\geq T_g(Q)(x_1)+T_g(Q)(x_2).
\]

\vspace{0.2cm}

\noindent This proves that for all $Q\in \mathcal{D}_{\mathrm{PH}}$ and all $g\in X$, the mapping $T_g(Q)$ is $\mathbf{C}$-superadditive. Positive homogeneity of each $T_g(Q)$ follows from the positive homogeneity of $Q$ and $A_Q$ (see the previous point) and the fact that $\mathbf{C}(x)$ is a convex cone.

\vspace{0.2cm}

\textbf{(3)}. Fix $Q\in \mathcal{D}_{\mathrm{PH}}$. It is immediate to see that $T_g(Q)\geq Q$ (take $\lambda=0$ and $h=x$, recalling that $\textbf{C}$ is reflexive). We show that $T_g(Q)\leq H$. First notice that, if $\lambda=0$, then for all $x\in X$, by the monotonicity of $H$,
\begin{align*}
&\sup\limits_{h}\left\lbrace Q(h)+\lambda A_Q(g):h+\lambda g\preceq x,\ h\in \mathbf{C}(x) \right\rbrace=\sup\limits_{h}\left\lbrace Q(h):h\preceq x,\ h\in \mathbf{C}(x) \right\rbrace\\
&\leq \sup\limits_{h}\left\lbrace H(h):h\preceq x,\ h\in \mathbf{C}(x) \right\rbrace\leq H(x).
\end{align*}
Therefore, we can focus on all $\lambda>0$. In particular, we have that for all $x\in X$,
    \begin{align*}
        &\sup\limits_{h,\lambda}\left\lbrace Q(h)+\lambda A_Q(g):h+\lambda g\preceq x,\ h\in \mathbf{C}(x),\ \lambda> 0 \right\rbrace\\
        &=\sup\limits_{h,\lambda}\inf\limits_{y\in \mathbf{C}(x),Q(y)>-\infty}\left\lbrace Q(h)+\lambda H(g+y)-\lambda Q(y):h+\lambda g\preceq x,\ h\in \mathbf{C}(x),\ \lambda> 0 \right\rbrace\\
        &=\sup\limits_{h,\lambda}\inf\limits_{y\in \mathbf{C}(x),Q(y)>-\infty}\left\lbrace Q(h)+ H(\lambda g+\lambda y)- Q(\lambda y):h+\lambda g\preceq x,\ h\in \mathbf{C}(x),\ \lambda> 0 \right\rbrace\\
        &\leq \sup \limits_{h,\lambda}\left\lbrace H(\lambda g+h):h+\lambda g\preceq x,\ h\in \mathbf{C}(x),\ \lambda> 0  \right\rbrace\\
        &= \sup \limits_{h,\lambda}\left\lbrace H(\lambda g+\lambda h):\lambda h+\lambda g\preceq x,\ h\in \mathbf{C}(x),\ \lambda> 0  \right\rbrace\\
        &\leq H(x),
    \end{align*}
    where the last two steps follow from the fact that $\mathbf{C}(x)$ is a cone and $H$ is monotone. Thus, connecting the two observations for $\lambda=0$ and all $\lambda>0$, we have that
    \[
    T_g(Q)(x) =\sup\limits_{h,\lambda}\left\lbrace Q(h)+\lambda A_Q(g):h+\lambda g\preceq x,\ h\in \mathbf{C}(x),\ \lambda\geq  0 \right\rbrace\leq H(x),
    \]
    for all $x\in X$, and hence $Q\leq T_g(Q)\leq H$.
    
\vspace{0.3cm}

\textbf{(4)}. Let $x\in X$ and $Q\in \mathcal{D}_{\mathrm{PH}}$. Then, since $Q$ is positively homogeneous, we have
\begin{align*}
T_x(Q)(x)&=\sup\limits_{h,\lambda}\left\lbrace Q(h)+\lambda A_Q(x):h+\lambda x\preceq x,\ h\in \mathbf{C}(x),\ \lambda\geq  0 \right\rbrace\\
&\geq \sup\limits_{h,n}\left\lbrace Q(h)+ \frac{n-1}{n}A_Q(x):h\preceq \frac{1}{n}x,\ h\in \mathbf{C}(x)\right\rbrace\\
&\geq \frac{1}{m}Q\left(x\right)+\frac{m-1}{m}A_{Q}(x)\\
&\geq -\frac{1}{m}\left\lvert Q\left(x\right)\right\rvert+\frac{m-1}{m}A_{Q}(x),
\end{align*}
for all $m\in \mathbb{N}$. Thus, letting $m\to \infty$, since all Dedekind complete Riesz spaces are Archimedean\footnote{We recall that a Riesz space $(W,\leq)$ is \textit{Archimedean} if whenever $\mathbf{0}_{W}\leq nx \leq y$ for all $n=1,2,\ldots$ and some $y\geq \mathbf{0}_{W}$, we have that $x = \mathbf{0}_{W}$.} (see Lemma 8.4 in \cite{AliBorder2006}), we have $T_x(Q)(x)\geq A_{Q}(x)$ for all $x\in X$.
\end{proof}

\vspace{0.2cm}

This lemma highlights the fact that for all $g\in X$, $T_g(\cdot)$ is a selfmap, as $T_g(Q)\in \mathcal{D}_{\mathrm{PH}}$ for all $Q\in \mathcal{D}_{\mathrm{PH}}$. Now we are ready to provide the proof of Theorem \ref{prop_Pataraia}.

\vspace{0.2cm}

\begin{proof}[Proof of Theorem \ref{prop_Pataraia}]
 By Lemma \ref{toolkit_lemma}, for all $g\in X$, $T_g:\mathcal{D}_{\mathrm{PH}}\to \mathcal{D}_{\mathrm{PH}}$ is inflationary. Therefore $\left(T_g\right)_{g\in X}$ is a family of inflationary functions. By Theorem \ref{pataraiaFPT} the family $\left(T_g\right)_{g\in X}$ has a common fixed point $Q^*\in \mathcal{D}_{\mathrm{PH}}$. Since for all $g\in X$ and all $Q\in \mathcal{D}_{\mathrm{PH}}$ we have $T_g(Q)\leq H$, it follows that for all $x\in X$ and $y\in \mathbf{C}(x)$ with $Q^*(y)>-\infty$,
\begin{align*}
H(x+y)-Q^*(y)&\geq T_g(Q^*)(x+y)-Q^*(y)\\
&\geq T_g(Q^*)(x)+T_g(Q^*)(y)-T_g(Q^*)(y)=T_g(Q^*)(x).
\end{align*}

\vspace{0.2cm}

\noindent Thus, $A_{Q^*}(x)\geq T_g(Q^*)(x)=Q^*(x)$ for all $x,g\in X$. Moreover, by Lemma \ref{toolkit_lemma}-\textit{(4)}, we have $T_g(Q^*)(x)=Q^*(x)=T_x(Q^*)(x)\geq A_{Q^*}(x)$, for all $x,g\in X$. Therefore, $A_{Q^*}=Q^*$. Since $A_{Q^*}$ is $\mathbf{C}$-sublinear and $Q^*$ is $\mathbf{C}$-superlinear we have that $Q^*$ is $\mathbf{C}$-linear. To conclude, since $Q^*\in \mathcal{D}_{\mathrm{PH}}$, the claim follows. 
\end{proof}

\vspace{0.2cm}
\subsubsection{Extension and Envelope Results} By Theorem \ref{prop_Pataraia}, we derive an analogous version of the Hahn-Banach Extension Theorem (see for example Theorem 1.25 in \cite{Positive_operators}).

\vspace{0.2cm}

\begin{corollary}\label{Ext_Theorem}
Let $H:X\to \mathbb{V}\cup\left\lbrace -\infty \right\rbrace$ be $\mathbf{C}$-sublinear and monotone, and $Y\subseteq X$ be a convex cone. If $\ell:Y\to \mathbb{V}\cup\left\lbrace -\infty \right\rbrace$ is $\mathbf{C}$-linear and satisfies $\ell\leq H|_{Y}$, then there exists a $\mathbf{C}$-linear map $Q:X\to \mathbb{V}\cup\left\lbrace -\infty \right\rbrace$ such that $Q\leq H$ and $\ell\leq Q|_Y$.
\end{corollary}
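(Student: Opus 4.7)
The natural plan is to recast the extension problem as a sandwich problem and apply Theorem \ref{prop_Pataraia} directly. To this end, define
\[
P:X\to \mathbb{V}\cup\{-\infty\},\qquad P(x)=\begin{cases}\ell(x) & \text{if } x\in Y,\\ -\infty & \text{if } x\in X\setminus Y.\end{cases}
\]
If I can show that $P$ is $\mathbf{C}$-superlinear and satisfies $P\leq H$, then Theorem \ref{prop_Pataraia} yields a $\mathbf{C}$-linear $Q:X\to \mathbb{V}\cup\{-\infty\}$ with $P\leq Q\leq H$, and by construction $Q|_Y\geq \ell$ and $Q\leq H$, which is exactly the conclusion.

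The main work is verifying that $P$ is $\mathbf{C}$-superlinear. For positive homogeneity, I would use that $Y$ is a convex cone: for $\lambda>0$, $x\in Y$ iff $\lambda x\in Y$ (since $x=\lambda^{-1}(\lambda x)$ and $\lambda^{-1}>0$). If $x\in Y$, then $P(\lambda x)=\ell(\lambda x)=\lambda\ell(x)=\lambda P(x)$ by positive homogeneity of $\ell$; if $x\notin Y$, then $\lambda x\notin Y$, so $P(\lambda x)=-\infty=\lambda\cdot(-\infty)=\lambda P(x)$. For $\mathbf{C}$-superadditivity, fix $(x,y)\in \mathbf{C}$. If both $x,y\in Y$, then $x+y\in Y$ because $Y+Y\subseteq Y$, and the $\mathbf{C}$-superadditivity of $\ell$ (a consequence of its $\mathbf{C}$-linearity, applied to the pair $(x,y)\in \mathbf{C}\cap(Y\times Y)$) gives
\[
P(x+y)=\ell(x+y)\geq \ell(x)+\ell(y)=P(x)+P(y).
\]
If instead at least one of $x,y$ lies outside $Y$, then $P(x)+P(y)=-\infty$, so the inequality $P(x+y)\geq P(x)+P(y)$ holds trivially.

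The inequality $P\leq H$ is immediate: on $Y$ it is the hypothesis $\ell\leq H|_Y$, and on $X\setminus Y$ we have $P\equiv -\infty\leq H$. At this point I would invoke Theorem \ref{prop_Pataraia} (note that $H$ is assumed monotone, as required) to obtain a $\mathbf{C}$-linear $Q$ sandwiched between $P$ and $H$, which finishes the proof.

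I do not anticipate any real obstacle: the whole argument is a one-line reduction to the sandwich theorem via the canonical $-\infty$-extension of $\ell$, and the only delicate point is booking the $-\infty$ arithmetic (using $\lambda\cdot(-\infty)=-\infty$ for $\lambda>0$ and $v+(-\infty)=-\infty$ for $v\in \mathbb{V}\cup\{-\infty\}$) to confirm the two $\mathbf{C}$-superlinearity axioms in the degenerate cases.
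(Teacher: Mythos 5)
Your proposal is correct and follows exactly the paper's own argument: define $P$ as $\ell$ on $Y$ and $-\infty$ off $Y$, check $\mathbf{C}$-superlinearity and $P\leq H$ by the same case analysis, and apply Theorem \ref{prop_Pataraia}. The only difference is that you spell out the positive-homogeneity verification (using that $Y$ is a cone so $x\in Y$ iff $\lambda x\in Y$) a bit more explicitly than the paper does.
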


\vspace{0.2cm}

\begin{proof}
Define $P:X\to \mathbb{V}\cup\left\lbrace -\infty \right\rbrace$ by
\[
P(x)=\begin{cases}
\ell(x) & x\in Y,\\
-\infty & x\notin Y,
\end{cases}
\]

\vspace{0.2cm}

\noindent for all $x\in X$. Thus $P$ is $\mathbf{C}$-superadditive. Indeed, suppose that $x\mathbf{C}y$. If $x,y\in Y$, then there is nothing to prove. If $x\notin Y$ or $y\notin Y$, then $P(x)+P(y)=-\infty\leq P(x+y)$. It is immediate to see that $P$ is positively homogeneous and $P\leq H$. Therefore, by Theorem \ref{prop_Pataraia}, there exists a $\mathbf{C}$-linear map $Q:X\to \mathbb{V}\cup\left\lbrace -\infty \right\rbrace$ such that $P\leq Q\leq H$. Therefore, $\ell=P|_{Y}\leq Q|_{Y}$.
\end{proof}

\vspace{0.3cm}

Next, as a direct application of Corollary \ref{Ext_Theorem}, we provide a general envelope representation result for $\mathbf{C}$-sublinear and monotone maps. For all $x\in X$, denote by $C_x$ the convex cone generated by $x$, that is, $C_x=\left\lbrace \lambda x:\lambda>0\right\rbrace$. Moreover, for all maps $H:X\to \mathbb{V}\cup\left\lbrace -\infty\right\rbrace$, let
\[
D(H)=\left\lbrace Q:X\to \mathbb{V}\cup\left\lbrace -\infty \right\rbrace: Q\ \textnormal{is}\ \mathbf{C}\textnormal{-linear}\ \textnormal{and\ }Q\leq H \right\rbrace.
\]

\vspace{0.3cm}

\begin{corollary}\label{envelope_Theorem}
If $H:X\to \mathbb{V}\cup\left\lbrace -\infty \right\rbrace$ is $\mathbf{C}$-sublinear and monotone, then
\[
H(x)=\sup\limits_{Q\in D(H)}Q(x),\ \hbox{for\ all}\ x\in X.
\]
\end{corollary}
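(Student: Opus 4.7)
The plan is to reduce the envelope representation to Corollary \ref{Ext_Theorem} by constructing, for each fixed $x_0 \in X$, a one-dimensional $\mathbf{C}$-linear functional that agrees with $H$ at $x_0$ and then extending it to all of $X$ without losing dominance by $H$.

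First, I would dispose of the easy inequality: by the definition of $D(H)$, every $Q \in D(H)$ satisfies $Q \leq H$ pointwise, so $\sup_{Q \in D(H)} Q(x) \leq H(x)$ for each $x \in X$, with no further work.

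For the reverse inequality, fix $x_0 \in X$ and let $Y = C_{x_0} = \{\lambda x_0 : \lambda > 0\}$, which is a convex cone since $\lambda_1 x_0 + \lambda_2 x_0 = (\lambda_1 + \lambda_2) x_0 \in Y$. Define $\ell : Y \to \mathbb{V} \cup \{-\infty\}$ by $\ell(\lambda x_0) = \lambda H(x_0)$. Positive homogeneity of $H$ immediately gives $\ell = H|_Y$, hence $\ell \leq H|_Y$. The crucial verification is that $\ell$ is $\mathbf{C}$-linear on $Y$: for any $\lambda_1, \lambda_2 > 0$, property \ref{item1:poshom_ccsp} together with symmetry and transitivity yields $(\lambda_1 x_0)\, \mathbf{C}\, (\lambda_2 x_0)$ (both are $\mathbf{C}$-related to $x_0$), while the identity $\ell((\lambda_1 + \lambda_2) x_0) = (\lambda_1 + \lambda_2) H(x_0) = \ell(\lambda_1 x_0) + \ell(\lambda_2 x_0)$ delivers both $\mathbf{C}$-super- and $\mathbf{C}$-subadditivity; positive homogeneity of $\ell$ is transparent.

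With the hypotheses of Corollary \ref{Ext_Theorem} in hand, I obtain a $\mathbf{C}$-linear $Q : X \to \mathbb{V} \cup \{-\infty\}$ with $Q \leq H$ and $Q|_Y \geq \ell$. In particular $Q \in D(H)$ and $Q(x_0) \geq \ell(x_0) = H(x_0)$, which combined with $Q \leq H$ forces $Q(x_0) = H(x_0)$; taking the supremum over $D(H)$ closes the argument. I expect the only mildly delicate point to be handling $H(x_0) = -\infty$ (where $\ell \equiv -\infty$ on $Y$ under the convention $\lambda \cdot (-\infty) = -\infty$ for $\lambda > 0$), but this case is trivial since any $Q \in D(H)$ then satisfies $Q(x_0) \leq H(x_0) = -\infty$ and equality holds vacuously.
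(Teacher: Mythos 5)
Your proposal is correct and follows essentially the same route as the paper's proof: fix $x_0$, define $\ell(\lambda x_0)=\lambda H(x_0)$ on the cone $C_{x_0}$, verify $\mathbf{C}$-linearity, and apply Corollary \ref{Ext_Theorem} to get $Q\in D(H)$ with $Q(x_0)=H(x_0)$. The extra details you supply (the trivial inequality $\sup_{Q\in D(H)}Q\leq H$, the $\mathbf{C}$-relatedness of $\lambda_1 x_0$ and $\lambda_2 x_0$, and the $H(x_0)=-\infty$ case) are all consistent with, and slightly more explicit than, the paper's argument.
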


\vspace{0.1cm}

\begin{proof}
Let $x\in X$ and define $\ell$ over $C_x$ as $\ell(\lambda x)=\lambda H(x)$, for all $\lambda x\in C_x$ with $\lambda>0$. Notice that $\ell$ is $\mathbf{C}$-linear and positively homogeneous. Moreover, by positive homogeneity we have that $\ell(\lambda x)=\lambda H(x)=H(\lambda x)$ for all $\lambda x\in C_x$. Thus, by Corollary \ref{Ext_Theorem}, there exists a $\mathbf{C}$-linear map $Q:X\to \mathbb{V}\cup\left\lbrace -\infty \right\rbrace$ such that $Q\leq H$ and $\ell\leq Q|_{C_x}$. Moreover,
\[
H(x)=\ell(x)\leq Q(x)\leq H(x), 
\]

\vspace{0.1cm}

\noindent and so $Q(x)=H(x)$. Therefore, for all $x\in X$, there exists a $\mathbf{C}$-linear map $Q_x:X\to \mathbb{V}\cup\left\lbrace -\infty \right\rbrace$ such that $Q_x(x)=H(x)$. This implies that
\[
H(x)=\sup\limits_{Q\in D(H)}Q(x),
\]
for all $x\in X$.
\end{proof}

\vspace{0.2cm}

\begin{remark}\label{remark_on_continuity}
Suppose that $\mathbb{V}=\mathbb{R}$ and $V$ is a topological vector space. For any map $G:V\to \left[-\infty,\infty\right)$, we define the (effective) domain of $G$ by $\textnormal{dom}G=\left\lbrace x\in V:G(x)>-\infty\right\rbrace$. We say that $G$ is proper if $\textnormal{dom}G$ is nonempty. It is important to note that Corollary \ref{envelope_Theorem} could also be proved by defining each $\ell$ on the vector space generated by $x\in V$, i.e., $\textnormal{span}\left\lbrace x\right\rbrace$, as $\ell(\alpha x)=\alpha H(x)$ for all $\alpha\in \mathbb{R}$, and then extending each $\ell$ to be equal to $-\infty$ everywhere else. In such a case, we would have that each extension $Q|_{\textnormal{dom}G}$ is continuous. Therefore, in this setting, Corollary \ref{envelope_Theorem} could be restated, adding a further restriction on $D(H)$, namely the continuity of its elements when restricted to their domains. This approach is similar to the one adopted by \cite{Roth_2000} (e.g., see Corollary 3.3).
\end{remark}

\vspace{0.1cm}
\subsubsection{Examples}\label{examples}
\begin{example}
Clearly $\mathbf{C}=X\times X$ is a convex-conic symmetric preorder. Under this convex-conic symmetric preorder $\mathbf{C}$-(super/sub)linearity corresponds to the classic (super/sub)linearity.
\end{example}

\vspace{0.2cm}

\begin{example}[{\bf{Positive homogeneity}}]
Fix $x\in X$ and define $\mathbf{D}_x=\left\lbrace (\lambda x,\beta x):\lambda,\beta>0\right\rbrace$. Then $\mathbf{D}_x$ is a convex-conic symmetric preorder. More importantly, define $\mathbf{D}=\bigcup_{x\in X}\mathbf{D}_x$. We now verify that $\mathbf{D}$ is also a convex-conic symmetric preorder. Since each $\mathbf{D}_x\subseteq \mathbf{D}$, we have that $\mathbf{D}$ is symmetric, reflexive, and satisfies property \ref{item1:poshom_ccsp}. Fix arbitrarily $x,y,z\in X$ and suppose that $x\mathbf{D}y\mathbf{D}z$. Then there exist $\alpha,\beta,\lambda,\gamma>0$ and $v,w\in X$ such that,
\[
(x,y)=(\alpha v,\beta v)\ \textnormal{and}\ (y,z)=(\lambda w,\gamma w).
\]
Therefore, $x=\alpha v=\frac{\alpha}{\beta} y=\lambda\frac{\alpha}{\beta} w$, and hence $x\mathbf{D}z$. Consequently, $\mathbf{D}$ is transitive. Moreover, 
\[
(x+y,z)=\left(\lambda\left(\frac{\alpha}{\beta}+1\right)w,\gamma w\right),
\]
and thus $\mathbf{D}$ satisfies property \ref{item2:add_ccsp}. Note that any positively homogeneous map $F:X\to \mathbb{R}$ is $\mathbf{D}$-linear.
\end{example}

\vspace{0.2cm}

\begin{example}[{\bf{Equivalent measures}}]
Consider a measurable space $(\Omega,\mathcal{F})$ and denote by $X$ the convex cone of countably additive measures $\mu:\mathcal{F}\to \left[0,\infty\right]$. For all measures $\mu\in X$ we define $N_{\mu}=\left\lbrace A\in \mathcal{F}:\mu(A)=0\right\rbrace$, i.e., the collection of $\mu$-null elements of $\mathcal{F}$. Two measures $\mu,\nu\in X$ are said to be \textit{equivalent}, denoted by $\mu\sim \nu$, if $N_{\mu}=N_{\nu}$. It can be verified that $\sim$ is a convex-conic symmetric preorder. Indeed, letting $\lambda>0$ and $\mu\in X$, we have $N_{\lambda \mu}=N_{\mu}$. If $\nu,\mu,\eta\in X$, $\nu\sim \mu$, and $\mu\sim \eta$, then $N_{\mu}=N_{\nu}=N_{\eta}$. Thus $\sim$ is transitive. Symmetry is also straightforward. To conclude, if $\nu\sim \mu\sim \eta$ for $\nu,\mu,\eta\in X$, we have that $\nu(A)+\mu(A)=0$ for all $A\in N_{\eta}$, and the converse holds as well. This yields $N_{\nu+\mu}=N_{\eta}$, and hence $\sim$ is a convex-conic symmetric preorder. 
\end{example}

\vspace{0.2cm}

\begin{example}[{\bf{Strict comonotonicity}}]\label{example_comon}
Let $(\Omega,\mathcal{F})$ be a measurable space, and denote by $L^0(\Omega,\mathcal{F})$ the space of $\mathcal{F}$-measurable real-valued functions. For all $x\in L^0(\Omega,\mathcal{F})$, denote by $C_x$ the convex cone generated by $x$. We say that $x,y\in L^0(\Omega,\mathcal{F})$ are \textit{strictly comonotonic} if either (1) $x\in C_y$; or (2) for all $\omega_1\neq \omega_2$ in $\Omega$, 
\[
\left[x(\omega_1)-x(\omega_2)\right]\left[y(\omega_1)-y(\omega_2)\right]>0.
\]

\vspace{0.2cm}

\noindent The strict comonotonicity relation, denoted as $\upharpoonleft\upharpoonright$, requires that $x \upharpoonleft\upharpoonright y$ if and only if $x,y\in L^0(\Omega,\mathcal{F})$ are strictly comonotonic. We now verify that $\upharpoonleft\upharpoonright$ is a convex-conic symmetric order. Clearly, $(\lambda x)\upharpoonleft\upharpoonright x$, for all $\lambda>0$, as $\lambda x\in C_x$, for all $x\in L^0(\Omega,\mathcal{F})$. Symmetry is immediate, since if $x\in C_y$, then $y\in C_x$. To show transitivity, fix $\omega_1\neq \omega_2$, and suppose that $x \upharpoonleft\upharpoonright y$ and $ y\upharpoonleft\upharpoonright z$. Then, we have three cases:
\vspace{0.2cm}
\begin{enumerate}
\item If $x\in C_y$ and $y\in C_z$, then $x\in C_z$. Thus, $x \upharpoonleft\upharpoonright z$.
\vspace{0.3cm}
\item If $x\in C_y$ and $y\notin C_z$, then 
\[
\left[y(\omega_1)-y(\omega_2)\right]\left[z(\omega_1)-z(\omega_2)\right]>0\ \textnormal{ and }\ x=\lambda y,
\]
for some $\lambda>0$. This implies that
\[
\left[x(\omega_1)-x(\omega_2)\right]\left[z(\omega_1)-z(\omega_2)\right]>0.
\]
Thus, $x \upharpoonleft\upharpoonright z$
\vspace{0.3cm}
\item If $x\notin C_y$ and $y\notin C_z$, then
\[
\left[x(\omega_1)-x(\omega_2)\right]\left[y(\omega_1)-y(\omega_2)\right]>0\ \textnormal{ and }\ \left[ y(\omega_1)-y(\omega_2)\right]\left[z(\omega_1)-z(\omega_2)\right]>0.
\]
\noindent Therefore, if $x(\omega_1)-x(\omega_2)>0$, then $y(\omega_1)-y(\omega_2)>0$, and hence $z(\omega_1)-z(\omega_2)>0$. The same reasoning (with inverted signs) applies to the case where $x(\omega_1)-x(\omega_2)<0$. Thus $x \upharpoonleft\upharpoonright z$.
\end{enumerate}

\vspace{0.3cm}

\noindent To conclude, fix $\omega_1\neq \omega_2$ and suppose that $x \upharpoonleft\upharpoonright y\upharpoonleft\upharpoonright z$. Then, we have three cases:
\vspace{0.2cm}
\begin{enumerate}
\item If $x\in C_y$ and $y\in C_z$, then $x+y\in C_z$. Thus, $(x+y) \upharpoonleft\upharpoonright z$.
\vspace{0.3cm}
\item If $x\in C_y$ and $y\notin C_z$, then 
\[
\left[y(\omega_1)-y(\omega_2)\right]\left[z(\omega_1)-z(\omega_2)\right]>0\ \textnormal{ and }\ x=\lambda y,
\]
for some $\lambda>0$. This implies that
\[
\left[x(\omega_1)+y(\omega_1)-x(\omega_2)-y(\omega_2)\right]\left[z(\omega_1)-z(\omega_2)\right]>0.
\]
Thus, $(x+y) \upharpoonleft\upharpoonright z$.
\vspace{0.3cm}
\item If $x\notin C_y$ and $y\notin C_z$, then 
\begin{align*}
&\left[x(\omega_1)+y(\omega_1)-x(\omega_2)-y(\omega_2)\right]\left[z(\omega_1)-z(\omega_2)\right]\\ 
&=\left[x(\omega_1)-x(\omega_2)\right]\left[z(\omega_1)-z(\omega_2)\right]+\left[y(\omega_1)-y(\omega_2)\right]\left[z(\omega_1)-z(\omega_2)\right]>0.
\end{align*}
Thus, $(x+y)\upharpoonleft\upharpoonright z$.
\end{enumerate}

\vspace{0.3cm}

\noindent Therefore, $\upharpoonleft\upharpoonright$ is a convex-conic symmetric preorder.
\end{example}

\vspace{0.2cm}

Comonotonicity is an important property in in decision theory, risk measurement, and the theory of risk sharing. A different example of a convex-conic preorder concerns vectors that are affinely related.

\vspace{0.2cm}

\begin{example}[{\bf{Affinity}}]
Let $X$ be a vector space, and fix $e\in X$. We say that $x$ and $y$ in $X$ are $e$-\textit{affinely related} if there exist $\alpha,\beta\neq 0$ such that $x=\alpha y+\beta e$. In particular, we define the binary relation $\mathbf{Aff}$ as follows
\[
x\mathbf{Aff}y \, \Longleftrightarrow \, \exists \, \alpha\neq 0, \exists \ \beta\in \mathbb{R}: x=\alpha y+\beta e.
\]

\vspace{0.1cm}

\noindent Even if straightforward, we provide the steps proving that $\mathbf{Aff}$ is a convex-conic symmetric preorder. Reflexivity is immediate, as $x=x$. Passing to symmetry, if $x\mathbf{Aff}y$, then there exist  $\alpha\neq 0$ and $\beta\in \mathbb{R}$ such that $x=\alpha y+\beta e$, and hence $y=\frac{1}{\alpha}-\beta e$, implying that $y\mathbf{Aff}x$. Suppose that $x\mathbf{Aff}y$ and $y\mathbf{Aff}z$. There exist $\alpha_1,\alpha_2\neq 0$ and $\beta_1,\beta_2\in \mathbb{R}$ such that $x=\alpha_1y+\beta_1 e$ and $y=\alpha_2z+\beta_2 e$. Then
\[
x=\alpha_1(\alpha_2z+\beta_2e)+\beta_1e=\alpha_1\alpha_2z+(\alpha_1\beta_2+\beta_1)e,
\]

\vspace{0.1cm}

\noindent proving that $x\mathbf{Aff}z$, and so $\mathbf{Aff}$ is transitive. Now fix $x\in X$, then clearly $(\lambda x)\mathbf{Aff}x$ for all $\lambda> 0$. In conclusion, if $x\mathbf{Aff}y\mathbf{Aff}z$, then there exist $\alpha_1,\alpha_2\neq 0$ and $\beta_1,\beta_2\in \mathbb{R}$ such that $x=\alpha_1y+\beta_1 e$ and $y=\alpha_2z+\beta_2 e$. Then
\begin{align*}
x+y&=\alpha_1y+\beta_1e+\alpha_2z+\beta_2e=\alpha_1\alpha_2z+\alpha_1\beta_2e+\beta_1e+\alpha_2z+\beta_2e\\    &=\left(\alpha_1\alpha_2+\alpha_2\right)z+\left(\alpha_1\beta_2+\beta_1+\beta_2\right)e,
\end{align*}

\vspace{0.1cm}

\noindent implying that $(x+y)\mathbf{Aff}z$. Thus $\mathbf{Aff}$ is a convex-conic symmetric preorder.
\end{example}

\vspace{0.2cm}

 To conclude this section we provide some examples of binary relations that are \textit{not} convex-conic symmetric preorders. Clearly any irreflexive, asymmetric, or nontransitive binary relation would work. For instance, probabilistic independence and orthogonality in the context of inner product spaces are relevant examples.

\vspace{0.2cm}
 
\begin{example}[{\bf{Inner product spaces}}]
Consider the Hilbert space $L^{2}\left(\left[0,1\right],\mathcal{B}\left[0,1\right],\mathrm{Leb}\right)$, where $\mathcal{B}\left[0,1\right]$ denotes the Borel sigma algebra and $\mathrm{Leb}$ the Lebesgue measure. We define the relation $\mathbf{Corr}$ by
\[
f\textnormal{\textbf{Corr}}g \Longleftrightarrow \int_{\left[0,1\right]}fg \, \textnormal{d}\mathrm{Leb}\geq 0.
\]

\vspace{0.1cm}

\noindent It is easy to see that $\mathbf{Corr}$ is reflexive and symmetric, and it satisfies conditions \ref{item1:poshom_ccsp} and \ref{item2:add_ccsp} of Definition \ref{def:ccsp}, but it fails transitivity. Indeed, consider the functions
\[
f=\mathbf{1}_{\left[0,1\right]},\ g=\mathbf{1}_{\left[0,\frac{1}{2}\right]},\ h=-\mathbf{1}_{\left[\frac{1}{2},1\right]}.
\]
Then,
\[
f\mathbf{Corr}g\ \textnormal{and}\ g \mathbf{Corr}h,\ \textnormal{but}\ \int_{\left[0,1\right]}fh\,\textnormal{d}\mathrm{Leb}=-\frac{1}{2}<0.
\]
\end{example}

\vspace{0.2cm}

The next example provides a symmetric preorder that fails property \ref{item2:add_ccsp} of Definition \ref{def:ccsp}.

\vspace{0.2cm}

\begin{example}
Define the function $\varphi:\mathbb{R}^2\to \mathbb{R}$ by
\[
\varphi(x,y)=\begin{cases}
-1 & (x,y)\in \left(\left(\mathbb{R}\setminus \left\lbrace0 \right\rbrace\right) \times \left\lbrace 0 \right\rbrace\right)\cup \left(\left\lbrace 0\right\rbrace\times \left(\mathbb{R}\setminus \left\lbrace 0\right\rbrace\right)\right)\\
0& \textnormal{otherwise}.
\end{cases}
\]

\vspace{0.2cm}

\noindent Consider the binary relation $x\mathlarger{\mathlarger{\boldsymbol{\varphi}}}y$ if and only if $\varphi(x,y)\geq 0$. By definition, $\varphi(x,x)=0$ for all $x\in \mathbb{R}$, and so $\mathlarger{\mathlarger{\boldsymbol{\varphi}}}$ is reflexive. Moreover, $\varphi(x,y)=\varphi(y,x)$ for all $x,y\in \mathbb{R}$, proving that $\mathlarger{\mathlarger{\boldsymbol{\varphi}}}$ is symmetric. Additionally, if $x,y,z\in \mathbb{R}$ with $\varphi(x,y)\geq 0$ and $\varphi(y,z)\geq 0$, then it must be the case that either $x,y,z=0$ or $x,y,z\neq 0$, and hence, $\varphi(x,z)=0$. Therefore $\mathlarger{\mathlarger{\boldsymbol{\varphi}}}$ is transitive. However, letting $x,z=1,y=-1$, we obtain
\[
\varphi(x,y)\geq 0,\ \varphi(y,z)\geq 0,\ \textnormal{and}\ \varphi(x+y,z)=\varphi(0,z)<0.
\]

\vspace{0.2cm}

\noindent Thus $\mathlarger{\mathlarger{\boldsymbol{\varphi}}}$ is a symmetric preorder that is not a convex-conic symmetric preorder.
\end{example}

\vspace{0.4cm}
\section{Relaxing positive homogeneity}
In this section, we weaken the definition of a convex-conic symmetric preorder by removing property \ref{item1:poshom_ccsp} (i.e., positive homogeneity). In particular, we define a new preorder called \say{summand symmetric preorder}. Fix a vector space $V$ and a subset $X\subseteq V$ such that $X+X\subseteq X$. 

\vspace{0.1cm}

\begin{defi}\label{def:summandp}
A binary relation $\mathbf{S}\subseteq X\times X$ is a \textit{summand symmetric preorder} if it is reflexive, transitive, and symmetric, and it satisfies the following property:
\vspace{0.1cm}
\begin{equation}
 \label{item2:add_summand preorder} x\mathbf{S}y\mathbf{S}z\ \textnormal{implies}\ (x+y)\mathbf{S}z,\ \textnormal{for\ all}\ x,y,z\in X.
\end{equation}
\end{defi}

\vspace{0.2cm}

Note that in this case $\mathbf{S}(x)=\left\lbrace y\in X:y\mathbf{S}x \right\rbrace$ is closed with respect to the summation of its elements. Clearly, if $X$ is a convex cone and $\mathbf{C}$ is a convex-conic symmetric preorder on $X$, then $\mathbf{C}$ is also a summand symmetric preorder. 

\vspace{0.2cm}

Let $\left(\mathbb{V},\leq \right)$ be a Dedekind-complete Riesz space. We say that a map $F:X\to\mathbb{V}\cup\left\lbrace -\infty \right\rbrace$ is $\mathbf{S}$-\textit{sublinear} if
\vspace{0.2cm}
\begin{enumerate}[label=(\roman*)]
    \item  $F$ is \textit{positively integer homogeneous}, i.e., $F(n x)=n F(x)$, for all $n\in \mathbb{N}$ and all $x\in X$.
   \vspace{-0.2cm}
    \item $F$ is $\mathbf{S}$-\textit{subadditive}, i.e., $F(x+y)\leq F(x)+F(y)$, for all $x,y\in X$ with $x\mathbf{S}y$.
\end{enumerate}

\vspace{0.3cm}

The definitions of $\mathbf{S}$-superlinearity and $\mathbf{S}$-linearity are analogous. We endow $V$ with a partial order $\preceq$ such that $(V,\preceq)$ is a partially ordered vector space. We say that a map $F:X\to \mathbb{V}\cup\left\lbrace -\infty \right\rbrace$ is \textit{monotone} if $x\preceq y$ implies $F(x)\leq F(y)$, for all $x,y\in X$. We adopt the convention that $0\cdot (-\infty)=\mathbf{0}_{\mathbb{V}}$. Using the exact same steps as in the proof of Theorem \ref{prop_Pataraia}, slightly changing the auxiliary maps, we can retrieve the following version of Theorem \ref{prop_Pataraia} for $\mathbf{S}$-(super/sub)linear functionals.

\vspace{0.2cm}

\begin{theorem}\label{prop_Pataraia_summand}
Suppose that $X$ is a convex cone, $P:X\to \mathbb{V}\cup\left\lbrace -\infty \right\rbrace$ is $\mathbf{S}$-superlinear, and $H:X\to \mathbb{V}\cup\left\lbrace -\infty \right\rbrace$ is $\mathbf{S}$-sublinear and monotone. If $P\leq H$, then there exists an $\mathbf{S}$-linear map $Q:X\to \mathbb{V}\cup\left\lbrace -\infty \right\rbrace$ such that $P\leq Q\leq H$. 
\end{theorem}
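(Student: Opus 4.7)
The proof closely mirrors that of Theorem \ref{prop_Pataraia}. The key preliminary observation is that, since $X$ is a convex cone, every positively integer homogeneous $F : X \to \mathbb{V} \cup \{-\infty\}$ is automatically positively rationally homogeneous on $X$: the identity $F(x) = F\bigl(n(x/n)\bigr) = nF(x/n)$ gives $F(x/n) = F(x)/n$, and hence $F\bigl((p/q)x\bigr) = (p/q)F(x)$ for all $p/q \in \mathbb{Q}_{>0}$. This supplies all the scaling that the original argument needs.

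I would define
\[
\mathcal{D}_{\mathrm{PH}}^{\mathbf{S}} = \{Q : X \to \mathbb{V} \cup \{-\infty\} : Q \text{ is } \mathbf{S}\text{-superlinear and } P \leq Q \leq H\},
\]
and check (as in the original) that it is strictly inductively ordered under the pointwise order. The auxiliary infimum is defined verbatim with $\mathbf{C}$ replaced by $\mathbf{S}$:
\[
A_Q(x) = \inf \{ H(x+y) - Q(y) : y \in \mathbf{S}(x),\ Q(y) > -\infty \}.
\]
The selfmap $T_g$ must be slightly adjusted to compensate for the absence of a scaling axiom in $\mathbf{S}$; the natural choice is
\[
T_g(Q)(x) = \sup \Bigl\{ \tfrac{1}{m}\bigl[Q(h) + \lambda A_Q(g)\bigr] : h + \lambda g \preceq mx,\ h \in \mathbf{S}(mx),\ m \in \mathbb{N},\ \lambda \in \mathbb{Q}_{\geq 0} \Bigr\}.
\]
The normalization by $1/m$ together with the enlargement of the constraint to $mx$ forces $T_g(Q)$ to be positively integer homogeneous (using the identity $\mathbf{S}(mx) = \mathbf{S}(x)$, which follows by induction from the summand property and symmetry), while restricting $\lambda$ to non-negative rationals keeps the substitution $y = h/\lambda$ inside $X$ so that the rational homogeneity of $H$ and $Q$ can be invoked.

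With these ingredients, the four claims of Lemma \ref{toolkit_lemma} carry over along the same lines. Parts (1) and (2) use only reflexivity, symmetry, and the summand property of $\mathbf{S}$. Part (3) combines rational homogeneity with the cancellation $\lambda Q(y) = Q(\lambda y)$ to bound $T_g(Q)(x)$ by $H(x)$, via the substitution $y = h/\lambda$ and the monotonicity of $H$. For part (4) one takes $m = 1$, $\lambda = (n-1)/n \in \mathbb{Q}_{\geq 0}$, and $h = x/n$, noting that $x/n \in \mathbf{S}(x)$ (setting $y = x/n$, the summand property applied to $y\mathbf{S}y\mathbf{S}y$ gives $2y\mathbf{S}y$, and inductively $ny\mathbf{S}y$, so by symmetry $x/n = y \mathbf{S} ny = x$) and that $Q(x/n) = Q(x)/n$ from the preliminary observation; letting $n \to \infty$ and using the Archimedean property of $\mathbb{V}$ yields $T_x(Q)(x) \geq A_Q(x)$. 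Pataraia's Theorem applied to $(T_g)_{g \in X}$ then produces a common fixed point $Q^* \in \mathcal{D}_{\mathrm{PH}}^{\mathbf{S}}$, and the closing argument of the proof of Theorem \ref{prop_Pataraia} shows $A_{Q^*} = Q^*$, so $Q^*$ is $\mathbf{S}$-linear and sandwiched between $P$ and $H$.

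The main obstacle is the design of $T_g$: in the convex-conic setting the positive homogeneity of $T_g(Q)$ was immediate from the scaling axiom $(\lambda x)\mathbf{C}x$, which is absent for $\mathbf{S}$. The normalization by $1/m$, combined with restricting $\lambda$ to rationals and leveraging the convex-cone structure of $X$ to promote integer homogeneity to rational homogeneity, substitutes for this missing axiom and allows the rest of the argument, including the application of Pataraia's Theorem, to proceed unchanged.
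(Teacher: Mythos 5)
Your proposal is correct and follows the same architecture as the paper's proof: the same set $\mathcal{D}_{\mathrm{PH}}$ of $\mathbf{S}$-superlinear maps sandwiched between $P$ and $H$, the same auxiliary map $A_Q$, a family of inflationary selfmaps $(T_g)_{g\in X}$ fed into Pataraia's Theorem, and the same closing argument yielding $A_{Q^*}=Q^*$. The one genuine point of divergence is the design of $T_g$. The paper keeps the constraint at $x$ and restricts the coefficient of $g$ to $n\in\mathbb{N}\cup\{0\}$, i.e.\ $T_g(Q)(x)=\sup\{Q(h)+nA_Q(g):h+ng\preceq x,\ h\in\mathbf{S}(x),\ n\in\mathbb{N}\cup\{0\}\}$; it then handles the substitution $y=h/n$ in part (3) via Lemma \ref{mini_lemma} ($x\mathbf{S}y\Rightarrow x/n\,\mathbf{S}\,y$), and obtains part (4) by writing $T_x(Q)(x)=T_x(Q)((m+1)x)/(m+1)$ and choosing $n=m$ inside the supremum. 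You instead homogenize the selfmap directly, normalizing by $1/m$ with the constraint scaled to $mx$ and allowing rational $\lambda$, leaning on the (correct) observation that positive integer homogeneity on a convex cone upgrades to positive rational homogeneity --- which is the functional-form counterpart of Lemma \ref{mini_lemma}. Your version makes the integer homogeneity of $T_g(Q)$ transparent in \emph{both} directions (via the rescalings $(h,\lambda,m)\mapsto(kh,k\lambda,m)$ and $(h,\lambda,m)\mapsto(h,\lambda,km)$, using $\mathbf{S}(mx)=\mathbf{S}(x)$), a point that is somewhat delicate in the integer-coefficient formulation; the price is that $\mathbf{S}$-superadditivity of your $T_g(Q)$, which you dismiss as using "only reflexivity, symmetry, and the summand property," actually requires an extra step --- passing two feasible triples with denominators $m_1,m_2$ to the common denominator $m_1m_2$ and checking $m_2h_1+m_1h_2\in\mathbf{S}(x_1+x_2)$ --- which you should write out. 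Your part (4) (take $m=1$, $h=x/n$, $\lambda=(n-1)/n$, let $n\to\infty$ and invoke the Archimedean property) reaches the same bound as the paper's by a slightly more direct route. Both constructions work; neither is more general than the other.
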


\vspace{0.2cm}

We omit the proof of this result as it is almost identical to that of Theorem \ref{prop_Pataraia}. We simply provide the form of one of the auxiliary maps we used in the previous proofs. In particular, let 
\[\mathcal{D}_{\mathrm{PH}}=\left\lbrace Q:X\to \mathbb{V}\cup\left\lbrace -\infty \right\rbrace:Q\ \textnormal{is}\ \mathbf{S}\textnormal{-superlinear} \ \hbox{and} \  P\leq Q\leq H \right\rbrace.\]

\vspace{0.2cm}

\noindent Now, fix  $g,x\in X$, and $Q\in \mathcal{D}_{\mathrm{PH}}$. We also define,
\[
T_g(Q)(x)=\sup\limits_{h,n}\left\lbrace Q(h)+n A_Q(g):h+n g\preceq x,\ h\in \mathbf{S}(x),\ n\in \mathbb{N}\cup \left\lbrace 0\right\rbrace \right\rbrace.
\]

\vspace{0.2cm}

\noindent The auxiliary function $A_Q$ is defined as in Section \ref{sandwich_ccsp}, using $\mathbf{S}$ instead of $\mathbf{C}$. 

\vspace{0.3cm}

The analogous of Lemma \ref{toolkit_lemma} also holds in this setting. Before providing its proof, we require the following simple result.

\vspace{0.2cm}

\begin{lemma}\label{mini_lemma}
Suppose that $X$ is a convex cone and $\mathbf{S}$ a summand symmetric preorder on $X$. Then for all $x,y\in X$, 
$$x\mathbf{S}y   \implies  \frac{x}{n}\mathbf{S}y\ \hbox{for\ all}\ n\in \mathbb{N}.$$ 
\end{lemma}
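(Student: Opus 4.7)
The plan is to reduce the claim to showing $(x/n)\mathbf{S}x$; once this is established, transitivity with the hypothesis $x\mathbf{S}y$ gives $(x/n)\mathbf{S}y$ immediately. Since $X$ is a convex cone and $1/n > 0$, the element $x/n = (1/n)x$ lies in $X$, so the statement makes sense.

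The heart of the argument is a straightforward induction on $k\in \mathbb{N}$ showing that $(kx/n)\mathbf{S}(x/n)$. The base case $k=1$ is reflexivity of $\mathbf{S}$. For the inductive step, assuming $(kx/n)\mathbf{S}(x/n)$, I would pair this with the reflexive relation $(x/n)\mathbf{S}(x/n)$ to produce the chain
\[
(kx/n)\,\mathbf{S}\,(x/n)\,\mathbf{S}\,(x/n),
\]
and then apply the summand property from Definition \ref{def:summandp} to conclude $((k+1)x/n)\mathbf{S}(x/n)$. Specializing $k=n$ yields $x\mathbf{S}(x/n)$, and symmetry of $\mathbf{S}$ flips this to $(x/n)\mathbf{S}x$, which was the reduction target.

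There is no genuine obstacle here: the argument is elementary, and the key insight is that the summand property together with reflexivity is strong enough to absorb integer multiples on the left slot, allowing one to build up $x$ from $n$ copies of $x/n$ while preserving $\mathbf{S}$-relatedness to $x/n$. Notably, no positive homogeneity of $\mathbf{S}$ itself is invoked, which is consistent with the fact that summand symmetric preorders drop property \ref{item1:poshom_ccsp} of Definition \ref{def:ccsp}.
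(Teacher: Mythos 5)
Your proposal is correct and follows essentially the same route as the paper: build up $x=\sum_{k=1}^{n}\frac{x}{n}$ via reflexivity of $\frac{x}{n}$ and repeated application of the summand property \eqref{item2:add_summand preorder}, then finish with symmetry and transitivity. Your explicit induction on $k$ merely spells out the iteration that the paper leaves implicit.
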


\vspace{0.2cm}

\begin{proof}
Fix $x,y\in X$ with $x\mathbf{S}y$, and choose $n\in \mathbb{N}$ arbitrarily. Since $\mathbf{S}$ is reflexive, it follows that $x/n\mathbf{S}x/n$. Hence, by property \eqref{item2:add_summand preorder}, we obtain
\[
x=\left(\sum_{k=1}^n\frac{x}{n}\right)\mathbf{S}\frac{x}{n}.
\]

\vspace{0.1cm}

\noindent Thus, by transitivity and symmetry of $\mathbf{S}$, we have $x/n\mathbf{S}y$. 
\end{proof}

\vspace{0.1cm}

\begin{lemma}\label{toolkit_lemma_summand}
The following claims hold:
\vspace{0.2cm}
\begin{enumerate}
    \item For all $Q\in \mathcal{D}_{\mathrm{PH}}$, we have $A_Q\geq Q$ and $A_Q$ is $\mathbf{S}$-sublinear.
    \vspace{0.3cm}
    \item For all $Q\in \mathcal{D}_{\mathrm{PH}}$ and $g\in X$, we have $T_g(Q)$ is $\mathbf{S}$-superlinear.
    \vspace{0.3cm}
    \item\label{item:3} For all $Q\in \mathcal{D}_{\mathrm{PH}}$, we have $Q\leq T_g(Q)\leq H$. 
          \vspace{0.3cm}
    \item\label{item:4} For all $Q\in \mathcal{D}_{\mathrm{PH}}$ and all $x\in X$, we have $T_x(Q)(x)\geq A_Q(x)$.
\end{enumerate}
\end{lemma}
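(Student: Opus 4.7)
The plan is to mirror the proof of Lemma \ref{toolkit_lemma} verbatim, substituting $\mathbf{S}$ for $\mathbf{C}$ throughout and replacing each occurrence of a positive real scalar by a positive integer. The only ancillary ingredient I need to set up in advance is the identity $\mathbf{S}(x)=\mathbf{S}(nx)$ for every $n\in\mathbb{N}$: iterated application of property \eqref{item2:add_summand preorder} starting from reflexivity gives $nx\in\mathbf{S}(x)$, while Lemma \ref{mini_lemma} applied to $nx\mathbf{S}nx$ gives $x\in\mathbf{S}(nx)$, and transitivity and symmetry equate the two sets. With this identity, the sets of admissible $y$'s and $h$'s in the definitions of $A_Q$ and $T_g(Q)$ behave under integer scaling just as the convex cones $\mathbf{C}(x)$ behaved under positive real scaling in the original argument.

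Parts (1), (2), and (3) then carry over as direct transliterations. For (1), the bound $A_Q\geq Q$ uses the same three-term chain, and $\mathbf{S}$-subadditivity follows because whenever $(x_1,x_2)\in\mathbf{S}$ and $y_i\in\mathbf{S}(x_i)$ with $Q(y_i)>-\infty$, symmetry, transitivity, and property \eqref{item2:add_summand preorder} still yield $y_1+y_2\in\mathbf{S}(x_1+x_2)$ and $(x_1+y_1,x_2+y_2)\in\mathbf{S}$, so the original inequality chain goes through unchanged; positive integer homogeneity of $A_Q$ is inherited from that of $Q$ and $H$ via $\mathbf{S}(nx)=\mathbf{S}(x)$. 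For (2), pairs $(h_i,n_i)$ admissible at $x_i$ concatenate to an admissible pair $(h_1+h_2,n_1+n_2)$ at $x_1+x_2$, and $\mathbf{S}$-superadditivity of $Q$ closes out the estimate; positive integer homogeneity of $T_g(Q)$ is again inherited from that of $Q$ and $A_Q$. For (3), splitting the supremum into the $n=0$ and $n\geq 1$ cases, applying monotonicity of $H$ to the former and using positive integer homogeneity to rewrite $n A_Q(g)-n Q(y)=H(ng+ny)-Q(ny)$ inside the $\inf$ in the latter, one arrives at $T_g(Q)\leq H$.

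The main obstacle is part \eqref{item:4}. The elegant choice $h=x/m$, $\lambda=(m-1)/m$ from the proof of Lemma \ref{toolkit_lemma} is no longer admissible because $(m-1)/m\notin\mathbb{N}\cup\{0\}$. My plan is to transfer the estimate to $mx$ rather than $x$: with $h=x\in\mathbf{S}(x)=\mathbf{S}(mx)$ and $n=m-1$, the configuration is admissible since $h+nx=mx\preceq mx$, so
\[
T_x(Q)(mx)\geq Q(x)+(m-1)A_Q(x).
\]
Invoking the positive integer homogeneity of $T_x(Q)$ from part (2) gives $T_x(Q)(mx)=mT_x(Q)(x)$; dividing by $m$ yields
\[
T_x(Q)(x)\geq \tfrac{1}{m}Q(x)+\tfrac{m-1}{m}A_Q(x)\geq -\tfrac{1}{m}\lvert Q(x)\rvert+\tfrac{m-1}{m}A_Q(x),
\]
and the Archimedean property of the Dedekind-complete Riesz space $\mathbb{V}$ lets $m\to\infty$ to conclude $T_x(Q)(x)\geq A_Q(x)$, exactly as in the original proof. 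The degenerate case $Q(x)=-\infty$ is absorbed by the convention $0\cdot(-\infty)=\mathbf{0}_{\mathbb{V}}$ together with the monotone convergence of the right-hand side in $\mathbb{V}\cup\{-\infty\}$.
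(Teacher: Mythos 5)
Your proposal is correct and takes essentially the same route as the paper: parts (1)--(3) are handled as direct transliterations with the $y=h/n$ substitution justified by Lemma \ref{mini_lemma}, and your treatment of part (4) (evaluating $T_x(Q)$ at $mx$ with $h=x$, $n=m-1$, dividing by $m$ via integer homogeneity, and letting $m\to\infty$ by the Archimedean property) is the paper's own argument up to an index shift from $m$ to $m+1$.
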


\vspace{0.2cm}

\begin{proof}
We provide a proof only for points (\ref{item:3}) and (\ref{item:4}). For the remaining points, the proofs are identical to those of Lemma \ref{toolkit_lemma}. 
\vspace{0.3cm}

\textbf{(3)}.
It is immediate to see that $T_g(Q)\geq Q$, for all $Q\in \mathcal{D}_{\mathrm{PH}}$ (take $n=0$ and $h=x$). We show that $T_g(Q)\leq H$. In particular, we have that for all $x\in X$,
\vspace{0.2cm}
    \begin{align*}
        T_g(Q)(x)&=\sup\limits_{h,n}\left\lbrace Q(h)+n A_Q(g):h+n g\preceq x,\ h\in \mathbf{S}(x),\ n\in \mathbb{N}\cup \left\lbrace 0\right\rbrace \right\rbrace\\
        &=\sup\limits_{h,n}\inf\limits_{y\in \mathbf{S}(x),Q(y)>-\infty}\left\lbrace Q(h)+n H(g+y)-n Q(y):h+n g\preceq x,\ h\in \mathbf{S}(x),\ n\in \mathbb{N}\cup \left\lbrace 0\right\rbrace \right\rbrace\\
        &=\sup\limits_{h,n}\inf\limits_{y\in \mathbf{S}(x),Q(y)>-\infty}\left\lbrace Q(h)+ H(n g+n y)- Q(n y):h+n g\preceq x,\ h\in \mathbf{S}(x),\ n\in \mathbb{N}\cup \left\lbrace 0\right\rbrace \right\rbrace\\
        &\leq \sup \limits_{h,n}\left\lbrace H(n g+h):h+n g\preceq x,\ h\in \mathbf{S}(x),\ n\in \mathbb{N}\cup \left\lbrace 0\right\rbrace  \right\rbrace\\
        &= \sup \limits_{h,n}\left\lbrace H(n g+n h):n h+n g\preceq x,\ h\in \mathbf{S}(x),\ n\in \mathbb{N}\cup \left\lbrace 0\right\rbrace  \right\rbrace\\
        &\leq H(x),
    \end{align*}

\vspace{0.1cm}

\noindent where the fourth inequality follows from setting $y=\frac{h}{n}$ (which belongs to $\mathbf{S}(x)$ by Lemma \ref{mini_lemma}), while the last two steps follow from the fact that $\mathbf{S}(x)$ is closed with respect to addition and $H$ is monotone. Thus, $Q\leq T_g(Q)\leq H$.

\vspace{0.3cm}

\textbf{(4)}. Let $x\in X$, $Q\in \mathcal{D}_{\mathrm{PH}}$, and $m\in \mathbb{N}$. Then, since $Q$ and $T_x(Q)$ are integer positively homogeneous, we have
\begin{align*}
T_x(Q)(x)&=\frac{T_x(Q)((m+1)x)}{m+1}\\
&=\frac{1}{m+1}\sup\limits_{h,n}\left\lbrace Q(h)+n A_Q(x):h+n x\preceq (m+1)x,\ h\in \mathbf{S}(x),\ n\in \mathbb{N}\cup \left\lbrace 0\right\rbrace \right\rbrace\\
&\geq \frac{1}{m+1}\sup\limits_{h}\left\lbrace Q(h)+m A_Q(x):h\preceq x,\ h\in \mathbf{S}(x) \right\rbrace\\
&\geq \frac{1}{m+1}Q\left(x\right)+\frac{m}{m+1}A_{Q}(x)\\
&\geq -\frac{1}{m+1}\left\lvert Q\left(x\right)\right\rvert+\frac{m}{m+1}A_{Q}(x).
\end{align*}

\vspace{0.1cm}

\noindent Thus, letting $m\to \infty$, since all Dedekind complete Riesz spaces are Archimedean (see Lemma 8.4 in \cite{AliBorder2006}), we have $T_x(Q)(x)\geq A_{Q}(x)$, for all $x\in X$.
\end{proof}

\vspace{0.2cm}

The proof of Theorem \ref{prop_Pataraia_summand} is now totally analogous to that of Theorem \ref{prop_Pataraia}, and it is omitted.



\vspace{0.4cm}
\section{Applications: Comonotonic Subadditivity}\label{sect:applications}

In this section, we apply the previous results to the case of comonotonicity on a specific measurable space. Suppose that  $\Omega=\left[0,1\right]$ and $\mathcal{F}=\mathcal{B}\left[0,1\right]$, the Borel sigma algebra on the unit interval. Using our results and observations in Example \ref{example_comon}, we obtain the following.

\vspace{0.2cm}

\begin{proposition}\label{com_repre}
Suppose that $H:B(\Omega,\mathcal{F})\to \left[-\infty,\infty\right)$ is $\lVert\cdot\rVert_{\infty}$-continuous when restricted to its domain, monotone, positively homogeneous, and strictly comonotonic subadditive. Then 
\begin{equation}\label{enve_com}
H(x)=\sup\limits_{Q\in D(H)}Q(x),\ \hbox{for\ all} \ x\in B(\Omega,\mathcal{F}),
\end{equation}
where $D(H)$ is a set of maps from $B(\Omega,\mathcal{F})$ to $\left[-\infty,\infty\right)$ that are comonotonic additive and $\lVert\cdot\rVert_{\infty}$-continuous when restricted to their domains. Moreover, $H$ is comonotonic subadditive on its domain.
\end{proposition}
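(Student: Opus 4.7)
The plan is to derive Proposition \ref{com_repre} as an instance of Corollary \ref{envelope_Theorem} applied to the convex-conic symmetric preorder $\mathbf{C}=\upharpoonleft\upharpoonright$ of strict comonotonicity on $X=B(\Omega,\mathcal{F})$, and then to promote the resulting envelope representation from strict comonotonic additivity to classical comonotonic additivity via an approximation argument on the representing maps.

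First, by Example \ref{example_comon}, strict comonotonicity $\upharpoonleft\upharpoonright$ is a convex-conic symmetric preorder on $B(\Omega,\mathcal{F})$. The hypotheses of positive homogeneity and strictly comonotonic subadditivity of $H$ are exactly the requirements that $H$ be $\upharpoonleft\upharpoonright$-sublinear, and monotonicity is assumed. Corollary \ref{envelope_Theorem} then yields an envelope representation of $H$ by $\upharpoonleft\upharpoonright$-linear (i.e., strictly comonotonic additive) maps $Q \le H$. To obtain the continuity claim, I apply the refinement discussed in Remark \ref{remark_on_continuity}: for each $x_0 \in B(\Omega,\mathcal{F})$, define $\ell$ on $\textnormal{span}\{x_0\}$ by $\ell(\alpha x_0)=\alpha H(x_0)$ and extend via Corollary \ref{Ext_Theorem} to a strictly comonotonic additive $Q\leq H$ with $Q(x_0)=H(x_0)$ and $Q|_{\textnormal{dom}\,Q}$ sup-norm continuous.

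The main obstacle, and the crux of the proof, is to upgrade strict comonotonic additivity of each representing $Q$ to (classical) comonotonic additivity. Given a comonotonic pair $x,y\in\textnormal{dom}\,Q$ with $x+y\in\textnormal{dom}\,Q$, one seeks sequences $(x_n),(y_n)$ converging uniformly to $x,y$ such that each pair $(x_n,y_n)$ is strictly comonotonic; the strict comonotonic additivity of $Q$ then yields $Q(x_n+y_n)=Q(x_n)+Q(y_n)$, and the $\|\cdot\|_\infty$-continuity of $Q$ on its domain passes to the limit. Using that $\Omega=[0,1]$, the approximation can be built from the comonotonic decomposition $x=f(x+y)$, $y=g(x+y)$ (with $f,g$ weakly increasing on $\textnormal{range}(x+y)$) by perturbing $f,g$ to strictly increasing approximants; the delicate point is dealing with $\omega_1\neq\omega_2$ where $(x,y)$ takes the same value, where an additional perturbation aligning with the rank structure on $\Omega$ is needed. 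Handling this case, and in particular respecting case (1) of strict comonotonicity on level sets of $(x,y)$ where the functions are proportional, is the principal technical difficulty.

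Once each $Q\in D(H)$ is known to be comonotonic additive, the final claim that $H$ is comonotonic subadditive on its domain follows directly from the envelope representation: for comonotonic $x,y \in \textnormal{dom}\,H$,
\[
H(x+y)=\sup_{Q\in D(H)}Q(x+y)=\sup_{Q\in D(H)}\bigl(Q(x)+Q(y)\bigr)\leq H(x)+H(y).
\]
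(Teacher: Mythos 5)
Your proposal is correct and follows essentially the same route as the paper: Corollary \ref{envelope_Theorem} together with Example \ref{example_comon} and Remark \ref{remark_on_continuity}, and then an approximation of comonotonic pairs by strictly comonotonic pairs, which is exactly the content of Lemma \ref{lemma_com_reals} in the Appendix (proved there via the comonotonic decomposition, strictly increasing Lipschitz approximants of $h,g$, and an injective perturbation of the base function --- the same construction you sketch). The only divergence is the last claim: the paper applies Lemma \ref{lemma_com_reals} directly to $H$ (approximate, use strict comonotonic subadditivity of $H$, pass to the limit by continuity on $\textnormal{dom}\,H$), whereas your chain $\sup_Q Q(x+y)=\sup_Q\bigl(Q(x)+Q(y)\bigr)$ implicitly assumes $x,y\in\textnormal{dom}\,Q$ for every $Q$, which does not follow from $x,y\in\textnormal{dom}\,H$; the direct argument on $H$ avoids this.
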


\vspace{0.1cm}

\begin{proof}
By Corollary \ref{envelope_Theorem}, Remark \ref{remark_on_continuity}, and Example \ref{example_comon}, it follows that \eqref{enve_com} holds, where $D(H)$ is a convex set of functionals from $B(\Omega,\mathcal{F})$ to $\left[-\infty,\infty\right)$ that are strictly comonotonic additive and $\lVert\cdot\rVert_{\infty}$-continuous when restricted to their domains. By Lemma \ref{lemma_com_reals} in the \hyperlink{LinkToAppendix}{Appendix}, all elements of $D(H)$ are comonotonic additive on their domains. Note that Lemma \ref{lemma_com_reals} also implies that $H$ is comonotonic subadditive on its domain.
\end{proof}

\vspace{0.2cm}

Comonotonic additive functionals play a central role in the theory of decision-making under ambiguity. Their study was pioneered by \cite{Schmeidler_89}, who also provided a representation of such functionals in terms of Choquet integrals (\cite{Schmeidler86}). Such functionals are also relevant for their connection to the theory of risk measurement (e.g.,  see \cite{Denuitetal2005} or \cite{FollmerSchied2016}). As for comonotonic subadditivity, less attention has been devoted to this property. \cite{Song2006} provided, along with further properties, a full characterization of comonotonic subadditive functionals as envelopes of Choquet integrals, and \cite{Song2009} provided some applications thereof.

\vspace{0.3cm}

\begin{remark}
Proposition \ref{com_repre} hints towards the possibility of representing continuous, monotone, positively homogeneous, and strictly comonotonic subadditive functionals as suprema of signed Choquet integrals. Indeed, as shown by \cite{Wang_com}, comonotonic additive and continuous functionals can be represented by Choquet integrals with respect to signed capacities. 
\end{remark}

\vspace{0.4cm}
\section{Concluding Remarks and An Open Question}
Building upon the work of \cite{Amarante_positivity} and \cite{Isotone_Wright}, we provided a nonlinear version of the classical Sandwich Theorem. We used this result to retrieve an extension result and an envelope representation result. Our examples show that the type of nonlinearities that we introduce include some important cases that have been devoted considerable attention in decision theory and mathematical finance. Our approach highlights the possibility of retrieving Hahn-Banach-type extension results that are widely applied in functional analysis, theoretical economics, and mathematical finance. We conclude with an (informal) question and two conjectures:

\vspace{0.2cm}

\begin{enumerate}[start=1,label={(\bfseries O\arabic*)}]
\item To what extent do our results depend on the fact that our maps can take $-\infty$ as a value? More formally, is it possible to prove the following reformulations of Corollaries \ref{Ext_Theorem} and \ref{envelope_Theorem}?

\vspace{0.2cm}

\begin{conj}\label{conj_1}
Let $H:X\to \mathbb{V}$ be $\mathbf{C}$-sublinear and monotone, and let $Y\subseteq X$ be a convex cone. If $\ell:Y\to \mathbb{V}$ is $\mathbf{C}$-linear and satisfies $\ell\leq H|_{Y}$, then there exists a $\mathbf{C}$-linear $Q:X\to \mathbb{V}$ such that $Q\leq H$ and $\ell\leq Q|_Y$.
\end{conj}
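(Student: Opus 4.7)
The plan is to reduce Conjecture \ref{conj_1} to Theorem \ref{prop_Pataraia}. A close inspection of the proof of Theorem \ref{prop_Pataraia} shows that if the $\mathbf{C}$-superlinear hypothesis $P$ already takes values in $\mathbb{V}$, then so does the $\mathbf{C}$-linear sandwich it produces: every element $Q\in \mathcal{D}_{\mathrm{PH}}$ then satisfies $Q\geq P>-\infty$, the auxiliary maps $A_Q$ and $T_g(Q)$ of Lemma \ref{toolkit_lemma} are defined without any $-\infty$ value entering, and the common fixed point $Q^*$ inherits finiteness. Thus, it suffices to construct a $\mathbf{C}$-superlinear $P:X\to \mathbb{V}$ such that $P\leq H$ and $\ell\leq P|_Y$, and then apply Theorem \ref{prop_Pataraia} to obtain the desired $Q$.

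A natural candidate is the inner $\mathbf{C}$-superlinear envelope of $\ell$,
\[
P(x)\;:=\;\sup\left\{\ell(y)\,:\,y\in Y,\ y\preceq x,\ y\mathbf{C} x\right\}.
\]
Positive homogeneity follows from that of $\ell$ together with the fact that $Y$ and $\mathbf{C}(x)$ are cones. For $\mathbf{C}$-superadditivity, given $x_1\mathbf{C}x_2$ and admissible $y_1, y_2$, symmetry and transitivity of $\mathbf{C}$ force $y_1\mathbf{C}y_2$, and two applications of property \ref{item2:add_ccsp} (combined with symmetry) yield $(y_1+y_2)\mathbf{C}(x_1+x_2)$; since $y_1+y_2\in Y$ and $\ell$ is $\mathbf{C}$-linear, $\ell(y_1)+\ell(y_2)=\ell(y_1+y_2)\leq P(x_1+x_2)$, and passing to suprema gives $P(x_1)+P(x_2)\leq P(x_1+x_2)$. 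Monotonicity of $H$ together with $\ell\leq H|_Y$ delivers $P\leq H$, while taking $y=x$ when $x\in Y$ yields $\ell\leq P|_Y$.

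The main obstacle --- and the point where the hypothesis that the codomain excludes $-\infty$ genuinely bites --- is ensuring $P(x)>-\infty$ for every $x\in X$. The envelope is $-\infty$ at any $x$ for which $\{y\in Y:\, y\preceq x,\ y\mathbf{C}x\}$ is empty, and nothing in the hypotheses of Conjecture \ref{conj_1} rules this out. Two workarounds seem worth exploring. The first is to enlarge the datum before forming the envelope, by adjoining an anchor $e\in X$ together with a value $\ell(e):=\kappa\in \mathbb{V}$ chosen small enough that the extension of $\ell$ to the convex cone generated by $Y\cup\{e\}$ remains $\mathbf{C}$-linear and is dominated by $H$; the envelope is then finite on every $x$ that is $\preceq$-above and $\mathbf{C}$-related to some positive multiple of $e$. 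The second is a Zorn's-lemma extension argument on the poset of pairs $(\tilde Y,\tilde\ell)$ with $Y\subseteq\tilde Y\subseteq X$ a convex cone, $\tilde\ell:\tilde Y\to\mathbb{V}$ $\mathbf{C}$-linear, $\ell\leq\tilde\ell|_Y$, and $\tilde\ell\leq H|_{\tilde Y}$, ordered by extension: chains admit pointwise-supremum upper bounds, Zorn produces a maximal $(Y^*,\ell^*)$, and one seeks to show $Y^*=X$ by a one-ray extension to any $x_0\in X\setminus Y^*$, setting $\bar\ell(z+\lambda x_0):=\ell^*(z)+\lambda\alpha$ for an appropriate $\alpha\in \mathbb{V}$. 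The technical heart --- which I expect to be the principal open difficulty --- is to exhibit $\alpha$ satisfying both the well-definedness constraints (forced values of the form $\alpha=(\ell^*(w)-\ell^*(w-\mu x_0))/\mu$ whenever $w, w-\mu x_0\in Y^*$ with $\mu>0$) and the Hahn--Banach-style upper bound $\alpha\leq\inf_{w\in Y^*,\,\mu>0} (H(w+\mu x_0)-\ell^*(w))/\mu$; establishing the separation of these constraints using $\mathbf{C}$-sublinearity of $H$, monotonicity of $H$, and the restricted $\mathbf{C}$-linearity of $\ell^*$ --- where additivity is only available on $\mathbf{C}$-related pairs --- is where the bulk of the work lies.
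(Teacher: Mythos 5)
The statement you are attempting is Conjecture \ref{conj_1}, which the paper explicitly poses as an \emph{open question} (item (O1) in the concluding section): the authors give no proof, and the whole point of the conjecture is to ask whether Corollary \ref{Ext_Theorem} survives when the codomain is $\mathbb{V}$ rather than $\mathbb{V}\cup\{-\infty\}$. Your write-up is candid about not being a proof, and indeed it is not one. The sound part is the reduction: you correctly observe that if one can produce a $\mathbf{C}$-superlinear $P:X\to\mathbb{V}$ (finite everywhere) with $\ell\leq P|_Y$ and $P\leq H$, then Theorem \ref{prop_Pataraia} yields a $\mathbf{C}$-linear $Q$ with $P\leq Q\leq H$, and finiteness of $Q$ is automatic from $Q\geq P$. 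Your verification that the inner envelope $P(x)=\sup\{\ell(y): y\in Y,\ y\preceq x,\ y\mathbf{C}x\}$ is positively homogeneous, $\mathbf{C}$-superadditive (via $y_1\mathbf{C}y_2\mathbf{C}x_1$, property \ref{item2:add_ccsp}, and symmetry), dominated by $H$, and above $\ell$ on $Y$ is correct where the defining set is nonempty, and Dedekind completeness guarantees the supremum exists there since the set is bounded above by $H(x)$.

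The genuine gap is exactly the one you name: nothing in the hypotheses prevents $\{y\in Y: y\preceq x,\ y\mathbf{C}x\}$ from being empty, in which case $P(x)=-\infty$ and the reduction collapses back to Corollary \ref{Ext_Theorem}, which is already known. Neither of your two workarounds is carried out: the ``anchor'' device requires the existence of an $e$ that is simultaneously $\preceq$-below and $\mathbf{C}$-related to every $x\in X$ (a strong structural assumption not granted by the conjecture), and the Zorn-plus-one-ray-extension route founders precisely where you say it does --- the compatibility of the forced values of $\alpha$ with the Hahn--Banach upper bound cannot be established by the usual two-point argument, because $\ell^*$ is additive only on $\mathbf{C}$-related pairs and $H$ is subadditive only on $\mathbf{C}$-related pairs, so the standard inequality $\ell^*(w_1)+\ell^*(w_2)\leq H(w_1+\mu x_0)+H(w_2-\mu x_0)$ chain is unavailable in general. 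In short, your proposal restates the conjecture in an equivalent form (existence of a finite $\mathbf{C}$-superlinear minorant) rather than resolving it; the problem remains open both in the paper and in your attempt.
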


\vspace{0.2cm}

\begin{conj}\label{conj_2}
If $H:X\to \mathbb{V}$ is $\mathbf{C}$-sublinear and monotone, then
\[
H(x)=\sup\limits_{Q\in \tilde{D}(H)}Q(x), \ \hbox{for\ all} \ x\in X,
\]
where
\[
\tilde{D}(H)=\Big\{Q:X\to \mathbb{V} \ \big\vert \ Q\ \textnormal{is}\ \mathbf{C}\textnormal{-linear}\ \textnormal{and\ }Q\leq H \Big\}.
\]
\end{conj}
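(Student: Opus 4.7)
Conjecture \ref{conj_2} reduces to Conjecture \ref{conj_1} by exactly the argument used to derive Corollary \ref{envelope_Theorem} from Corollary \ref{Ext_Theorem}. Fixing $x_0 \in X$ and defining $\ell(\lambda x_0) = \lambda H(x_0)$ on the cone $C_{x_0}$, Conjecture \ref{conj_1} would supply a $\mathbf{C}$-linear $Q \colon X \to \mathbb{V}$ with $Q \leq H$ and $\ell \leq Q|_{C_{x_0}}$; since $H(x_0) = \ell(x_0) \leq Q(x_0) \leq H(x_0)$, one gets $Q(x_0) = H(x_0)$, so the supremum in the envelope is attained at $x_0$. The entire task is thus to prove Conjecture \ref{conj_1}.

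For this, my plan is to rerun the Pataraia machinery of Theorem \ref{prop_Pataraia} restricted to the strictly finite-valued poset
\[
\mathcal{D}^{\mathrm{fin}}_{\mathrm{PH}} = \left\{ Q \colon X \to \mathbb{V} : Q \text{ is } \mathbf{C}\text{-superlinear and } P \leq Q \leq H \right\},
\]
where $P$ is a fixed finite-valued $\mathbf{C}$-superlinear lower bound for $H$ that extends $\ell$. Chains in $\mathcal{D}^{\mathrm{fin}}_{\mathrm{PH}}$ have pointwise suprema in $\mathbb{V}$: each chain is bounded above by the finite-valued $H$ and $\mathbb{V}$ is Dedekind complete, so the supremum exists and lies in the interval $\left[P(x), H(x)\right]$ pointwise. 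The auxiliary operator
\[
A_Q(x) = \inf \left\{ H(x+y) - Q(y) : y \in \mathbf{C}(x) \right\}
\]
no longer needs the safeguard $Q(y) > -\infty$ and still exists in $\mathbb{V}$, since $Q(x)$ is a lower bound (compare Lemma \ref{toolkit_lemma}(1)); dually the supremum defining $T_g(Q)$ exists in $\mathbb{V}$ because it is dominated by $H(x)$. The operator $T_g$ is otherwise unchanged and continues to satisfy $Q \leq T_g(Q) \leq H$, so it stabilises $\mathcal{D}^{\mathrm{fin}}_{\mathrm{PH}}$; the remaining parts of Lemma \ref{toolkit_lemma} carry over \emph{mutatis mutandis}. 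Pataraia's theorem then yields a common fixed point $Q^\ast \in \mathcal{D}^{\mathrm{fin}}_{\mathrm{PH}}$ that is $\mathbf{C}$-linear and finite-valued, and satisfies $\ell \leq Q^\ast|_{C_{x_0}}$ by construction.

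\textbf{Main obstacle.} The whole argument hinges on the existence of a finite-valued $\mathbf{C}$-superlinear extension $P$ of $\ell$ dominated by $H$ on all of $X$; without such $P$, the poset $\mathcal{D}^{\mathrm{fin}}_{\mathrm{PH}}$ might be empty. I would attempt to produce $P$ via a Zorn's lemma argument on pairs $(Y, \ell_Y)$ consisting of a convex sub-cone $Y \supseteq C_{x_0}$ and a finite-valued $\mathbf{C}$-linear $\ell_Y \colon Y \to \mathbb{V}$ dominated by $H|_Y$ and extending $\ell$; upper bounds for chains are immediate via pointwise union. The critical step is the one-direction extension: given a maximal $(Y^\ast, \ell^\ast)$ with $Y^\ast \neq X$, choose $y_0 \in X \setminus Y^\ast$ and find $v \in \mathbb{V}$ such that assigning $\ell^\ast(y_0) = v$ and propagating by $\mathbf{C}$-linearity yields a finite-valued extension still dominated by $H$ on the enlarged cone. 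Classically this requires a sandwich inequality between lower envelopes of the form $\ell^\ast(z) - H(z - y_0)$ (taken over suitable $z \in Y^\ast$) and upper envelopes of the corresponding dual form, both built using the restricted-additivity constraint imposed by $\mathbf{C}$. I expect this to be the decisive difficulty: unlike the case $\mathbf{C} = X \times X$, the limited control that $\mathbf{C}$-sublinearity provides over sums of the form $y + y_0$ with $y \in Y^\ast$ may fail to guarantee that the two envelopes meet in $\mathbb{V}$ rather than only in $\mathbb{V} \cup \left\lbrace -\infty \right\rbrace$, and overcoming this likely requires either a more refined construction of $P$ tailored to the cone $\mathbf{C}(x_0)$ or an additional structural hypothesis on $\mathbf{C}$ that regulates how elements outside $Y^\ast$ interact with it.
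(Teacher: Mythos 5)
This statement is Conjecture \ref{conj_2}, which the paper explicitly leaves \emph{open}: there is no proof in the paper to compare against, and what you have written is a research plan with an admitted unresolved step, not a proof. Your reduction of Conjecture \ref{conj_2} to Conjecture \ref{conj_1} is correct and is exactly the argument the paper uses to derive Corollary \ref{envelope_Theorem} from Corollary \ref{Ext_Theorem}; likewise, granting a finite-valued $\mathbf{C}$-superlinear minorant $P\leq H$ with $P|_{C_{x_0}}\geq \ell$, the Pataraia machinery does survive the passage to finite values (chains and the operators $A_Q$, $T_g$ stay in $\mathbb{V}$ by Dedekind completeness, since everything is squeezed between $P$ and $H$ pointwise). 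But the entire content of the conjecture is concentrated in the step you defer: producing that finite-valued $P$. In the paper's proofs this is done by setting $P=-\infty$ off $Y$, which makes $\mathbf{C}$-superadditivity vacuous there; once $-\infty$ is banned, $\mathcal{D}^{\mathrm{fin}}_{\mathrm{PH}}$ may a priori be empty, and even the nonemptiness of $\tilde{D}(H)$ is part of what must be proved.

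Your proposed one-step extension is where the plan concretely breaks down, for reasons beyond the one you name. First, well-definedness: the cone generated by $Y^{\ast}$ and $y_0$ consists of elements $y+\lambda y_0$, and ``propagating by $\mathbf{C}$-linearity'' the value $\ell^{\ast}(y)+\lambda v$ is only forced when $y\,\mathbf{C}\,(\lambda y_0)$; for unrelated pairs the extension is not determined by $\mathbf{C}$-linearity, and a single element may admit several representations $y+\lambda y_0$ whose consistency you cannot check without additivity. Second, the classical sandwich inequality $\ell^{\ast}(z)-H(z-y_0)\leq H(w+y_0)-\ell^{\ast}(w)$ is proved via $\ell^{\ast}(z)+\ell^{\ast}(w)=\ell^{\ast}(z+w)\leq H(z+w)\leq H(z-y_0)+H(w+y_0)$, which needs both $z\,\mathbf{C}\,w$ and $(z-y_0)\,\mathbf{C}\,(w+y_0)$, and moreover $z-y_0$ need not even lie in the convex cone $X$, so $H(z-y_0)$ may be undefined. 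Third, even if the relevant constraint sets are nonempty and $\mathbf{C}$-compatible, in a general Dedekind-complete $\mathbb{V}$ the required value $v$ is pinned between a supremum and an infimum that you have no tool to separate. So the proposal correctly locates the obstruction but does not overcome it; as it stands, the conjecture remains unproved.
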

\end{enumerate}

\newpage

\setlength{\parskip}{0.5ex}
\hypertarget{LinkToAppendix}{\ }
\appendix

\vspace{-1cm}

\section{Additional Results for Section \ref{sect:applications}}
In this section we report some auxiliary results that we applied in Section \ref{sect:applications}. In particular, we provide a partial answer to the following 

\vspace{0.2cm}

\begin{question}\label{question_comono}
If $x,y\in B(\Omega,\mathcal{F})$ are comonotonic, then there exist two sequences $(x_n)_{n\in \mathbb{N}},(y_n)_{n\in \mathbb{N}}$ in $B(\Omega,\mathcal{F})$ such that:
\begin{enumerate}
\item $x_n\to x$ and $y_n\to y$;
\vspace{0.2cm}
\item for all $n\in \mathbb{N}$, $x_n$ and $y_n$ are strictly comonotonic.
\end{enumerate}
\end{question}

\vspace{0.2cm}

It is immediate to see that this does not hold in general measurable spaces. Indeed, take any $\Omega\neq\emptyset$ and let $\mathcal{F}=\left\lbrace \emptyset,\Omega \right\rbrace$. In this measurable space $(\Omega,\mathcal{F})$, a function is $\mathcal{F}$-measurable if and only if it is constant, and therefore there is no injective measurable function. This observation highlights the fact that Question \ref{question_comono} may admit a positive answer only if we focus on measurable spaces with a sufficiently sparse sigma-algebra, where this sparsity depends also on the cardinality of $\Omega$. Providing a full answer to this question is out of the scope of this paper. Hence, we focus on a special case. However, we first 
need some auxiliary lemmas.

\vspace{0.2cm}

\begin{lemma}\label{charact_strict_comonot}
Let $x,y\in B(\Omega,\mathcal{F})$. The following are equivalent
\begin{enumerate}[label=(\roman*)]
\item $x$ and $y$ are strictly comonotonic with $x\notin C_y$.
\vspace{0.2cm}
\item there exist two increasing functions $h,g:\mathbb{R}\to \mathbb{R}$ and an injective $z\in B(\Omega,\mathcal{F})$ such that $x=h(z)$, $y=g(z)$, and $h,g$ are injective over $z(\Omega)$.
\end{enumerate}
\end{lemma}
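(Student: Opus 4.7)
The plan is to prove the equivalence by treating each direction separately; the substantive content lies in (i) $\Rightarrow$ (ii).

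For the easy direction (ii) $\Rightarrow$ (i), I would argue pointwise. Given $x = h(z)$ and $y = g(z)$ with $z$ injective and $h, g$ increasing on $\mathbb{R}$ and injective on $z(\Omega)$, pick any $\omega_1 \neq \omega_2$. Then $z(\omega_1) \neq z(\omega_2)$ by injectivity of $z$, and the combination of monotonicity with injectivity on $z(\Omega)$ forces both $h$ and $g$ to strictly preserve the order of $z(\omega_1), z(\omega_2)$. Consequently, $x(\omega_1) - x(\omega_2)$ and $y(\omega_1) - y(\omega_2)$ are nonzero and share a common sign, yielding $[x(\omega_1) - x(\omega_2)][y(\omega_1) - y(\omega_2)] > 0$, i.e., Case (2) of strict comonotonicity in Example \ref{example_comon}.

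For (i) $\Rightarrow$ (ii), I would first observe that the assumption $x \notin C_y$ rules out Case (1) of Example \ref{example_comon}, so strict comonotonicity reduces to the product condition $[x(\omega_1) - x(\omega_2)][y(\omega_1) - y(\omega_2)] > 0$ for all $\omega_1 \neq \omega_2$. This immediately forces both $x$ and $y$ to be injective on $\Omega$ (a vanishing factor would violate the strict inequality) and, more strongly, to share the same order: $x(\omega_1) < x(\omega_2)$ if and only if $y(\omega_1) < y(\omega_2)$. Then I would take $z := x$ (injective, in $B(\Omega,\mathcal{F})$) and $h := \mathrm{id}_{\mathbb{R}}$ (trivially increasing and injective on $z(\Omega)$), and define $\tilde g : x(\Omega) \to \mathbb{R}$ by $\tilde g(x(\omega)) := y(\omega)$. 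Injectivity of $x$ makes $\tilde g$ well-defined, and the shared-order property shows that $\tilde g$ is strictly increasing on $x(\Omega)$.

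The only mildly technical point, and what I expect to be the main obstacle, is extending $\tilde g$ to an everywhere-increasing function $g : \mathbb{R} \to \mathbb{R}$ whose restriction to $z(\Omega) = x(\Omega)$ remains injective. A canonical recipe is to set
\[
g(t) := \sup \bigl\{ \tilde g(s) : s \in x(\Omega),\ s \leq t \bigr\}
\]
for $t \geq \inf x(\Omega)$, and extend by any monotone rule below $\inf x(\Omega)$. The strict monotonicity of $\tilde g$ on $x(\Omega)$ ensures that $g$ coincides with $\tilde g$ on $x(\Omega)$, so injectivity there is preserved, while the supremum definition makes $g$ increasing on $\mathbb{R}$. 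The identity $y = g(z)$ then holds pointwise by construction, completing the proof.
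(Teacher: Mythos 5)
Your proof is correct, and for the substantive direction (i) $\Rightarrow$ (ii) it takes a genuinely different route from the paper. The paper starts from the classical representation theorem for comonotonic pairs (cited as Theorem 2.7 of \cite{Comonot_DDGR}): it first obtains \emph{some} increasing $h,g$ and \emph{some} $z$ with $x=h(z)$, $y=g(z)$, and then argues by contradiction that strict comonotonicity forces $z$ to be injective and $h,g$ to be injective on $z(\Omega)$. You instead build the representation from scratch: the product condition (which is what remains of strict comonotonicity once $x\notin C_y$ rules out the dilation case) forces $x$ and $y$ to be injective and order-isomorphic, so you may take $z:=x$, $h:=\mathrm{id}$, and extend $\tilde g\colon x(\omega)\mapsto y(\omega)$ monotonically to all of $\mathbb{R}$ via the supremum formula; boundedness of $y$ keeps the supremum finite and strict monotonicity of $\tilde g$ on $x(\Omega)$ guarantees $g|_{x(\Omega)}=\tilde g$ and injectivity there. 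Your construction is more elementary and self-contained (no external citation), at the cost of producing a possibly discontinuous $g$ rather than the $1$-Lipschitz functions the cited theorem delivers; since the lemma does not assert Lipschitz continuity, and the later Lemma \ref{lemma_com_reals} re-invokes the cited theorem directly rather than this lemma, nothing downstream is affected. The (ii) $\Rightarrow$ (i) direction is the same short pointwise argument as in the paper; note that both you and the paper only verify the product condition there and not literally that $x\notin C_y$ (which can fail, e.g.\ for $h=g$), but that is an imprecision in the statement rather than in your argument.
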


\vspace{0.2cm}

\begin{proof}
$(i)\Rightarrow (ii)$. Since $x,y$ are comonotonic, there exist increasing functions $h,g:\mathbb{R}\to \mathbb{R}$ and $z\in B(\Omega,\mathcal{F})$ such that $x=h(z)$ and $y=g(z)$ (see e.g., \cite{Comonot_DDGR}, Theorem 2.7). Suppose that $z$ is not injective. Then, there exist $\omega_1,\omega_2\in \Omega$ such that $\omega_1\neq \omega_2$ and $z(\omega_1)=z(\omega_2)$. Thus, 
\[
\left[x(\omega_1)-x(\omega_2)\right]\left[y(\omega_1)-y(\omega_2)\right]=\left[h(z(\omega_1))-h(z(\omega_2))\right]\left[y(\omega_1)-y(\omega_2)\right]=0,
\]
contradicting the strict comonotonicity of $x,y$. Therefore, $z$ must be injective. Now suppose that $h$ is not injective over $z(\Omega)$. Then there exist  $\omega_1,\omega_2\in \Omega$ such that $\omega_1\neq \omega_2$, $z(\omega_1)\neq z(\omega_2)$, and $h(z(\omega_1))=h(z(\omega_2))$. Since $z$ is injective, we have that $\omega_1\neq \omega_2$ and
\[
\left[x(\omega_1)-x(\omega_2)\right]\left[y(\omega_1)-y(\omega_2)\right]=\left[h(z(\omega_1))-h(z(\omega_2))\right]\left[y(\omega_1)-y(\omega_2)\right]=0,
\]
contradicting the strict comonotonicity of $x,y$. Interchanging $h$ with $g$ and $x$ with $y$, the same conclusion holds for $g$ as well. 

\vspace{0.2cm}

$(ii)\Rightarrow (i)$. If $\omega_1\neq \omega_2$ and $h(z(\omega_1))>h(z(\omega_2))$, then we must have that $z(\omega_1)>z(\omega_2)$ and hence $g(z(\omega_1))>g(z(\omega_2))$. This proves the claim.
\end{proof}

\vspace{0.2cm}

\begin{lemma}\label{piecewise_approx}
Suppose that $f:\left[a,b\right]\to \mathbb{R}$ is $1$-Lipschitz and increasing. Then, there exists a sequence of $1$-Lipschitz and strictly increasing functions $(f_n)_{n\in \mathbb{N}}$ from $\left[a,b\right]$ to $\mathbb{R}$ that converges uniformly to $f$.
\end{lemma}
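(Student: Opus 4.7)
The plan is to construct $f_n$ explicitly as a convex combination of $f$ with the identity map $\mathrm{id}(x) = x$, weighted almost entirely on $f$ but with a small strictly positive weight on $\mathrm{id}$ that forces strict monotonicity while preserving the $1$-Lipschitz property. Specifically, I will define
\[
f_n(x) = \Bigl(1 - \tfrac{1}{n}\Bigr) f(x) + \tfrac{1}{n}\, x, \qquad n \in \mathbb{N},\ x \in [a,b].
\]

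The verification then proceeds in three short steps. First, since both $f$ and $\mathrm{id}$ are $1$-Lipschitz, any convex combination of them is $1$-Lipschitz by the triangle inequality. Second, for $x < y$, the weak monotonicity of $f$ combined with $y - x > 0$ yields $f_n(y) - f_n(x) \geq \tfrac{1}{n}(y - x) > 0$, so $f_n$ is strictly increasing. Third, uniform convergence follows from the identity $|f_n(x) - f(x)| = \tfrac{1}{n}|x - f(x)|$ together with the fact that $x \mapsto |x - f(x)|$ is bounded on the compact interval $[a,b]$ (being continuous, since $f$ is Lipschitz hence continuous).

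There is no real obstacle to this approach: the delicate point is choosing a perturbation that is simultaneously small enough to maintain uniform approximation, $1$-Lipschitz to preserve the Lipschitz constant, and strictly monotone to upgrade weak monotonicity to strict monotonicity. The identity function is the canonical candidate satisfying all three requirements, and because $f$ and $\mathrm{id}$ are monotone in the same direction, the convex combination saturates rather than exceeds the $1$-Lipschitz bound. Any other strictly increasing $1$-Lipschitz auxiliary function would work just as well, but the identity is the cleanest choice.
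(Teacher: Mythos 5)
Your proof is correct, but it takes a genuinely different and substantially simpler route than the paper's. The paper proceeds by piecewise-linear interpolation: it selects a fine grid of points in $[a,b]$, carefully deletes and replaces grid points lying in intervals where $f$ is constant so that the interpolated values become strictly increasing, and then verifies that the resulting piecewise-linear function is $1$-Lipschitz and uniformly close to $f$. Your construction $f_n = (1-\tfrac{1}{n})f + \tfrac{1}{n}\,\mathrm{id}$ sidesteps all of that delicate point-selection: the $1$-Lipschitz property is immediate from the triangle inequality applied to a convex combination of two $1$-Lipschitz functions, strict monotonicity comes from $f_n(y)-f_n(x)\geq \tfrac{1}{n}(y-x)>0$, and uniform convergence from boundedness of $x\mapsto |x-f(x)|$ on the compact interval. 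All three steps check out, and since the application (Lemma on approximating comonotonic pairs) only uses the existence of $1$-Lipschitz strictly increasing uniform approximants, not any piecewise-linear structure, your construction serves the paper's purposes equally well. The only quibble is your closing remark that the monotone alignment of $f$ and $\mathrm{id}$ is what keeps the combination within the $1$-Lipschitz bound: the triangle inequality already gives this for any convex combination of $1$-Lipschitz functions, so that observation, while true, is not needed.
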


\vspace{0.2cm}

\begin{proof}
Since $f$ is continuous and increasing, there exist at most countably many disjoint nondegenerate intervals $(I_n)_{n\in \mathbb{N}}$ over which $f$ is constant, i.e., $f(I_n)=\left\lbrace k_n\right\rbrace$ for all $n\in \mathbb{N}$ and some $k_n\in \mathbb{R}$. Fix $\varepsilon>0$ and take finitely many points, $a=a_1<\ldots<a_k=b$ such that $\left\lvert a_i-a_{i+1}\right\rvert<\varepsilon/2$ for all $i=1,\ldots,k-1$. Suppose that $f(a_i)=f(a_j)$ for some $i<j$. Since $f$ is increasing, we have that
\[
f(a_i)=f(a_{i+1})=\ldots=f(a_j).
\]

\vspace{0.2cm}

\noindent This implies that $a_i,\ldots,a_j\in I_n$ for some $n\in \mathbb{N}$. Thus, to find a strictly increasing approximation we need to modify our vector of points. In particular, we remove all points $a_i,\ldots,a_{j-1}$, and we add one point $\tilde{a}_i$ picked from the set \[\left\lbrace x\in \left[a_{i-1},\inf I_n\right):f(x)<f(a_i)\ \textnormal{and}\ \left\lvert f(x)-f(a_i)\right\rvert<\frac{\varepsilon}{2}\right\rbrace.\]
Repeating this operation for all points where the function is constant, within the set $\left\lbrace a_1,\ldots,a_k\right\rbrace$, we retrieve (in a finite amount of operations) a set $\left\lbrace \tilde{a}_1,\ldots,\tilde{a}_m \right\rbrace$ with $\tilde{a}_1=a$ and $\tilde{a}_m=b$ such that
\[
f\left(\tilde{a}_1\right)<\left(\tilde{a}_2\right)<\ldots<f(\tilde{a}_{m-1})<f(\tilde{a}_{m}).
\]

\vspace{0.2cm}

\noindent Define the function $f^m:\left\lbrace \tilde{a}_1,\ldots,\tilde{a}_m \right\rbrace\to \mathbb{R}$ by $f^m(\tilde{a}_i)=f(\tilde{a}_i)$, for all $i=1,\ldots,m$. By linear interpolation, we can extend $f^m$ to the whole interval $\left[a,b\right]$, and we denote such an extension again by $f^m$. Clearly, $f^m$ is strictly increasing, and we now show that it must also be 1-Lipschitz. To this end, notice that the slope of $f^m|_{\left[\tilde{a}_i,\tilde{a}_{i+1}\right]}$ satisfies
\[
\frac{f(\tilde{a}_{i+1})-f(\tilde{a}_i)}{\tilde{a}_{i+1}-\tilde{a}_i}\leq 1,
\]

\vspace{0.2cm}

\noindent for all $i=1,\ldots,k-1$, where the inequality follows from the fact that $f$ is $1$-Lipschitz, $\tilde{a}_{i+1}>\tilde{a}_i$, and $f$ is increasing. This implies that $f^m|_{\left[\tilde{a}_i,\tilde{a}_{i+1}\right]}$ is $1$-Lipschitz for all $i=1,\ldots,k-1$. If $x>y$, then $x\in \left(\tilde{a}_i,\tilde{a}_{i+1}\right]$ and $y\in \left[\tilde{a}_j,\tilde{a}_{j+1}\right]$ for some $i>j$. This implies that
\vspace{0.1cm}
{\small\begin{align*}
\left\lvert f^{m}(x)-f^m(y)\right\rvert &= f^{m}(x)-f^m(y)\\
&= f^m(x)-f^{m}(\tilde{a}_i)+f^m(\tilde{a}_i)-f^m(\tilde{a}_{i-1})+\ldots+f^m(\tilde{a}_{j+1})-f^m(\tilde{a}_j)+f^m(\tilde{a}_j)-f^m(y)\\
&\leq x-\tilde{a}_i+\tilde{a}_i+\ldots+\tilde{a}_{j+1}-\tilde{a}_j+\tilde{a}_j-y=\left\lvert x-y\right\rvert.
\end{align*}} 

\vspace{0.2cm}

\noindent Thus $f^m$ is $1$-Lipschitz. Now we prove that $f^m$ is $\varepsilon$-close to $f$. For all $x\in \left[\tilde{a}_{i-1},\tilde{a}_i\right]$ and $i=2,\ldots,m$ we have
\begin{align*}
\left\lvert f(x)-f^m(x)\right\rvert & \leq \left\lvert f(x)-f(\tilde{a}_{i-1})\right\rvert+\left\lvert f(\tilde{a}_{i-1})-f^m(x)\right\rvert\\
&\leq \frac{\varepsilon}{2}+\left\lvert f^m(\tilde{a}_{i-1})-f^m(\tilde{a}_{i})\right\rvert\\
&= \frac{\varepsilon}{2}+\left\lvert f(\tilde{a}_{i-1})-f(\tilde{a}_{i})\right\rvert\\
&<\varepsilon.
\end{align*}
Since $x$ was chosen arbitrarily, it follows that $\lVert f-f^m\rVert_{\infty} \to 0$.
Therefore, there exists a sequence of $1$-Lipschitz and strictly increasing functions $(f_n)_{n\in \mathbb{N}}$ converging uniformly to $f$.
\end{proof}

\vspace{0.2cm}

Now suppose that $\Omega=\left[0,1\right]$ and $\mathcal{F}=\mathcal{B}\left[0,1\right]$ is the Borel sigma-algebra.\footnote{All the results provided in this section would hold for any closed interval $I\subseteq \mathbb{R}$.}

\vspace{0.2cm}

\begin{lemma}\label{lemma_com_reals}
If $x,y\in B(\Omega,\mathcal{F})$ are comonotonic, then there exist two sequences $(x_n)_{n\in \mathbb{N}},(y_n)_{n\in \mathbb{N}}$ in $B(\Omega,\mathcal{F})$ such that:
\begin{enumerate}
\item $x_n\xrightarrow{\lVert\cdot \rVert_{\infty}} x$ and $y_n\xrightarrow{\lVert \cdot\rVert_{\infty}} y$;
\vspace{0.2cm}
\item For all $n\in \mathbb{N}$, $x_n$ and $y_n$ are strictly comonotonic with $x_n\notin C_{y_n}$.
\end{enumerate}
\end{lemma}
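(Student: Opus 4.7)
The plan is to lift the comonotonic pair $(x,y)$ to an injective common driver $z_n$ together with strictly increasing $1$-Lipschitz maps $h_n, g_n$, and then compose. Setting $z := x+y \in B(\Omega,\mathcal{F})$, the classical representation of comonotonic pairs (\cite{Comonot_DDGR}, Theorem~2.7) yields increasing $h, g : \mathbb{R}\to\mathbb{R}$ with $x = h(z)$ and $y = g(z)$. Since $h + g$ acts as the identity on $z(\Omega)$ while both summands are non-decreasing, $h$ and $g$ are $1$-Lipschitz on $z(\Omega)$ and can be extended to monotone $1$-Lipschitz functions on all of $\mathbb{R}$. Applying Lemma~\ref{piecewise_approx} on a compact interval containing $z(\Omega)$, I would obtain strictly increasing $1$-Lipschitz sequences $h_n \to h$ and $g_n \to g$ uniformly; a harmless further perturbation (e.g.\ replacing $h_n$ by $h_n + n^{-2}\arctan$) ensures that $h_n$ is not a positive scalar multiple of $g_n$.

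The crux of the argument is to produce an injective Borel approximation of $z$. I propose
\[
z_n(\omega) := \frac{\lfloor n\,z(\omega)\rfloor}{n} + \frac{\omega}{2n}, \qquad \omega \in [0,1],
\]
which is Borel measurable, bounded, and therefore belongs to $B(\Omega,\mathcal{F})$. To verify injectivity, fix $\omega_1 \neq \omega_2$ and set $k_i := \lfloor n\,z(\omega_i)\rfloor$: if $k_1 = k_2$, then $z_n(\omega_1) - z_n(\omega_2) = (\omega_1 - \omega_2)/(2n) \neq 0$; otherwise $|k_1 - k_2|/n \geq 1/n > 1/(2n) \geq |\omega_1 - \omega_2|/(2n)$, so the grid increment cannot be cancelled by the displacement. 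Uniform convergence follows from
\[
\lVert z_n - z\rVert_\infty \leq \bigl\lVert \lfloor n\,z\rfloor/n - z\bigr\rVert_\infty + \tfrac{1}{2n} \leq \tfrac{3}{2n}.
\]

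Defining $x_n := h_n \circ z_n$ and $y_n := g_n \circ z_n$, the $1$-Lipschitz property of $h$ yields $\lVert x_n - x\rVert_\infty \leq \lVert h_n - h\rVert_\infty + \lVert z_n - z\rVert_\infty \longrightarrow 0$, and similarly for $y_n$. For any $\omega_1 \neq \omega_2$, the injectivity of $z_n$ together with the strict monotonicity of $h_n$ and $g_n$ yields $(x_n(\omega_1) - x_n(\omega_2))(y_n(\omega_1) - y_n(\omega_2)) > 0$, which is precisely case~(2) in the definition of strict comonotonicity from Example~\ref{example_comon}; the non-proportionality of $h_n$ and $g_n$, together with the injectivity of $z_n$, forces $x_n \notin C_{y_n}$.

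The main obstacle is exactly the construction of the injective $z_n$: a naive additive perturbation $z + \varepsilon_n\cdot\mathrm{id}$ can fail to be injective because the secant slopes of a general Borel $z$ may cover all of $\mathbb{R}$. The floor-plus-identity trick sidesteps this by first discretizing the range of $z$ onto a grid of mesh $1/n$ and then reserving an inner sub-interval of length $1/(2n)$ in each grid cell, just wide enough to accommodate the displacement $\omega \mapsto \omega/(2n)$ without crossing into neighboring cells.
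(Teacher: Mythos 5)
Your proof is correct and follows the same overall skeleton as the paper's: represent the comonotonic pair as $x=h(z)$, $y=g(z)$ with $h,g$ increasing and $1$-Lipschitz, replace $h,g$ by strictly increasing $1$-Lipschitz approximants via Lemma~\ref{piecewise_approx}, replace $z$ by injective Borel approximants, and compose. The one step where you genuinely diverge is the construction of the injective approximants of $z$: the paper first takes step functions $s_n\to z$ uniformly and then ``tilts'' each plateau by a small affine displacement on its defining subinterval, whereas you discretize the \emph{range} of $z$ onto a grid of mesh $1/n$ and add the global displacement $\omega\mapsto\omega/(2n)$; your injectivity and error estimates ($|k_1-k_2|/n\ge 1/n>1/(2n)\ge|\omega_1-\omega_2|/(2n)$, and $\lVert z_n-z\rVert_\infty\le 3/(2n)$) check out. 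Your construction is arguably cleaner --- it avoids the intermediate step-function approximation and the bookkeeping over partitions --- though both exploit the same feature of $\Omega=[0,1]$, namely that the identity provides a bounded injective Borel ``tie-breaker.'' You also explicitly attend to the clause $x_n\notin C_{y_n}$, which the paper's proof does not address; that is a point in your favor, but your fix is the one soft spot: replacing $h_n$ by $h_n+n^{-2}\arctan$ makes it only $(1+n^{-2})$-Lipschitz (harmless for the estimates) and the claim that this ``ensures'' $h_n\neq\lambda g_n$ on $z_n(\Omega)$ deserves an actual argument (e.g., that a strictly concave-plus-piecewise-linear perturbation cannot coincide with a positive multiple of a piecewise-linear $g_n$ on the uncountable set $z_n(\Omega)$). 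Finally, the compact interval on which you invoke Lemma~\ref{piecewise_approx} must contain $z_n(\Omega)$ for all large $n$, not merely $z(\Omega)$; since $\lVert z_n-z\rVert_\infty\le 3/(2n)$ this is immediate, but it should be said, as the paper does by passing to a tail subsequence.
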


\vspace{0.2cm}

\begin{proof}
Since $x,y\in B(\Omega,\mathcal{F})$ are comonotonic, there exist two increasing $1$-Lipschitz functions $h,g:\mathbb{R}\to \mathbb{R}$ and an $z\in B(\Omega,\mathcal{F})$ such that $x=h(z)$, $y=g(z)$ (\cite{Comonot_DDGR}, Theorem 2.7). We first prove a claim that will yield the result.

\vspace{0.2cm}

\begin{claim}\label{inject_measur_approx}
There exists a sequence of injective and measurable functions converging to $z$.
\end{claim}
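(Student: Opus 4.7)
The strategy is to combine a uniform simple-function approximation of $z$ with a small strictly-increasing perturbation that separates points inside each level set. Given $n \in \mathbb{N}$, since $z$ is bounded and Borel measurable, the standard uniform density of simple functions yields a simple Borel function
\[
s_n = \sum_{k=1}^{N_n} c_k^n \, \mathbf{1}_{A_k^n}
\]
with pairwise distinct values $c_1^n, \ldots, c_{N_n}^n$, pairwise disjoint Borel level sets $A_1^n, \ldots, A_{N_n}^n$ partitioning $[0,1]$, and $\lVert s_n - z\rVert_{\infty} < 1/(2n)$. I would set $\delta_n = \min_{i\neq j}\lvert c_i^n - c_j^n\rvert$ (read $+\infty$ if $N_n = 1$), choose $\eta_n = \min\{1/(2n),\, \delta_n/2\}$, and define
\[
z_n(\omega) = s_n(\omega) + \eta_n\, \omega,\qquad \omega \in [0,1].
\]

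Each $z_n$ is manifestly Borel measurable and bounded, so $z_n \in B(\Omega, \mathcal{F})$. The triangle inequality gives $\lVert z_n - z\rVert_{\infty} \leq \eta_n + 1/(2n) \leq 1/n$, so $z_n \xrightarrow{\lVert\cdot\rVert_{\infty}} z$. For injectivity, assume $z_n(\omega_1) = z_n(\omega_2)$ with $\omega_1 \in A_i^n$ and $\omega_2 \in A_j^n$. When $i = j$, cancellation of $c_i^n$ forces $\eta_n\omega_1 = \eta_n\omega_2$, hence $\omega_1 = \omega_2$. When $i \neq j$, I obtain $\lvert c_i^n - c_j^n\rvert = \eta_n\lvert \omega_2 - \omega_1 \rvert \leq \eta_n$, which clashes with $\lvert c_i^n - c_j^n\rvert \geq \delta_n > \eta_n$. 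Hence $z_n$ is injective.

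The only real subtlety is the choice of $\eta_n$: it must be small enough both to force $\lVert z_n - z\rVert_{\infty} \to 0$ and to prevent cross-level collisions between the finitely many values of $s_n$, yet strictly positive so that the perturbation actually separates points within each level set. Both requirements are handled simultaneously by $\eta_n = \min\{1/(2n), \delta_n/2\}$. I do not foresee any deeper obstacle; in particular, the restriction to $\Omega = [0,1]$ is used \emph{only} through the fact that the identity map $\mathrm{id}_{[0,1]}$ is a bounded injective Borel function, which is precisely the sparsity property of $(\Omega, \mathcal{F})$ whose absence caused the counterexample with $\mathcal{F} = \{\emptyset, \Omega\}$ discussed before the claim. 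Consequently, no appeal to Borel-isomorphism theorems or measurable selection is needed.
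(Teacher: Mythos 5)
Your proof is correct, and it takes a somewhat different and in fact more careful route than the paper's. The paper also starts from a uniform simple-function approximation $s_n$, but it perturbs each constant piece \emph{separately}, replacing the value $a_i^n$ on the interval $I_i^n$ by an affine ramp from $a_i^n-\varepsilon$ to $a_i^n+\varepsilon$, and then asserts that the result is ``clearly'' injective. That assertion is only clear \emph{within} each piece: if two distinct pieces carry values $a_i^n$ and $a_j^n$ with $\lvert a_i^n-a_j^n\rvert<2\varepsilon$ (in particular if a non-monotone $z$ forces the step approximation to take the same value on two disjoint intervals), the ranges of the two ramps overlap and cross-piece collisions are not ruled out without an additional smallness condition on $\varepsilon$ relative to the gaps between the $a_i^n$. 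Your construction handles exactly this point: adding the single global perturbation $\eta_n\,\omega$ with $\eta_n\leq\delta_n/2$ makes the within-piece and cross-piece cases both explicit, the latter via $\eta_n\lvert\omega_1-\omega_2\rvert\leq\eta_n<\delta_n$. It also has the minor advantage of not requiring the level sets $A_k^n$ to be nondegenerate intervals -- any Borel partition from the standard simple-function approximation will do -- whereas the paper's ramp needs $\sup I_i^n>\inf I_i^n$. Both arguments use $\Omega=[0,1]$ only through the existence of a bounded injective Borel function on $\Omega$, as you correctly observe. One cosmetic remark: the statement of the Claim only says ``converging,'' but the way it is used downstream requires uniform convergence, which your estimate $\lVert z_n-z\rVert_\infty\leq 1/n$ delivers, so nothing is missing.
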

\vspace{0.1cm}
\begin{proof}[Proof of the claim.]
Let $\left(s_n\right)_{n\in \mathbb{N}}\in  B(\Omega,\mathcal{F})^{\mathbb{N}}$ be a sequence of step functions converging uniformly to $z$. Each $s_n$ can be uniquely identified with a partition $\left(I^n_i\right)_{i=1}^{k_n}$ of nondegenerate subintervals of $\Omega$, and a vector of values $\left(a^n_{1},\ldots,a^n_{k_n}\right)$, for some $k_n\in \mathbb{N}$. For all $n\in \mathbb{N}$ and $\varepsilon>0$, we can define the following,
\[
s^\varepsilon_{n}(\omega)=2\varepsilon \left(\frac{\omega-\inf I^n_i}{\sup I^n_i-\inf I^n_i}\right)+a^n_i-\varepsilon,
\]

\vspace{0.2cm}

\noindent for all $\omega \in I^n_i$ and all $i=1,\ldots,k_n$. Clearly $s^{\varepsilon}_n$ is an injective Borel measurable function, for all $\varepsilon>0$ and all $n\in \mathbb{N}$. Intuitively, we are simply rotating slightly the constant \say{lines} of each $s_n$ over all their partitions. Moreover, note that
\[
\left\lVert s^\varepsilon_{n}-s_n \right\rVert_{\infty}\leq \varepsilon,
\]

\vspace{0.2cm}

\noindent for all $n\in \mathbb{N}$ and $\varepsilon>0$. This implies that $\left(s^{1/n}_n\right)_{n\in \mathbb{N}}$ converges uniformly to $z$. Indeed,
\[
\left\lVert z-s^{1/n}_n \right\rVert_{\infty}\leq \left\lVert z-s_n \right\rVert_{\infty}+\left\lVert s_n-s^{1/n}_{n} \right\rVert_{\infty}\leq \left\lVert z-s_n \right\rVert_{\infty}+\frac{1}{n}\to 0.
\]

\vspace{0.2cm}

\noindent Thus, we found a sequence of injective and measurable functions converging uniformly to $z$. 
\end{proof}

\vspace{0.2cm}
Given that $z$ is bounded, there exist $m,M\in \mathbb{R}$ such that $z(\Omega)\subseteq \left(m,M\right)$. Since $\left(s^{1/n}_n\right)_{n\in \mathbb{N}}$ converges uniformly to $z$, there exists some $N\in \mathbb{N}$ sufficiently large so that 
\[
s_{n}^{1/n}(\Omega)\subseteq \left[m,M\right],
\]
for all $n\geq N$. Using a slight abuse of notation, we will now identify by $\left(s^{1/n}_n\right)_{n\in \mathbb{N}}$ its subsequence $\left(s^{1/n_k}_{n_k}\right)_{k\in \mathbb{N}}$ with $n_1=N$, $n_2=N+1$ and so on. By Lemma \ref{piecewise_approx}, there exist two sequences of $1$-Lipschitz and strictly increasing functions $\left(h_n\right)_{n\in \mathbb{N}}$ and $\left(g_n\right)_{n\in \mathbb{N}}$ from $\left[m,M\right]$ to $\mathbb{R}$ converging uniformly to $h$ and $g$. For all $n\in \mathbb{N}$, let $x_n=h_n\left(s_{n}^{1/n}\right)$ and $y_n=g_n\left(s_{n}^{1/n}\right)$. Fix $n\in \mathbb{N}$ arbitrarily. If $\omega_1\neq \omega_2$, then $s_n^{1/n}(\omega_1)\neq s_n^{1/n}(\omega_2)$ since $s_n^{1/n}$ is injective, say without loss of generality that $s_n^{1/n}(\omega_1)> s_n^{1/n}(\omega_2)$. Since $h_n$ and $g_n$ are both strictly increasing we have that
\vspace{0.1cm}
\begin{align*}
&\left[x_n(\omega_1)-x_n(\omega_2)\right]\left[y_n(\omega_1)-y_n(\omega_2)\right]\\&=\left[h_n\left(s_{n}^{1/n}\right)(\omega_1)-h_n\left(s_{n}^{1/n}\right)(\omega_2)\right]\left[g_n\left(s_{n}^{1/n}\right)(\omega_1)-g_n\left(s_{n}^{1/n}\right)(\omega_2)\right]>0.
\end{align*}

\vspace{0.2cm}

\noindent Thus, $x_n,y_n$ are strictly comonotonic. Fix $n\in \mathbb{N}$ arbitrarily. Since $h_n$ is $1$-Lipschitz, we have
\vspace{0.1cm}
\begin{align*}
\left\lVert h(z)-h_n\left(s_{n}^{1/n}\right)\right\rVert_{\infty}&\leq \left\lVert h(z)-h_n\left(z\right)\right\rVert_{\infty}+ \left\lVert h_n(z)-h_n\left(s_{n}^{1/n}\right) \right\rVert_{\infty} \\
&=\left\lVert h(z)-h_n\left(z\right)\right\rVert_{\infty}+\sup\limits_{\omega\in \Omega}\left\lvert h_n(z(\omega))-h_n\left(s_{n}^{1/n}(\omega)\right) \right\rvert\\
&\leq \left\lVert h(z)-h_n\left(z\right)\right\rVert_{\infty}+\sup\limits_{\omega\in \Omega}\left\lvert z(\omega)-s_{n}^{1/n}(\omega)\right\rvert\to 0.
\end{align*}

\vspace{0.2cm}

\noindent Thus $\left(x_n\right)_{n\in \mathbb{N}}$ converges uniformly to $x$. The same holds for $(y_n)_{n\in \mathbb{N}}$, and the proof is totally analogous.
\end{proof}

\vspace{0.8cm}

\bibliographystyle{apalike}
\bibliography{bibliography}

\vspace{0.7cm}

\end{document}